\newcommand{\R}{\mathbf{R}}
\newtheorem{assumption}{Assumption}
\newcommand{\reff}[1]{{\rm(\ref{#1})}}
\newcommand{\1}{\mathds{1}}
\newcommand{\Ac}{\mathcal{A}}
\newcommand{\E}{\mathbb{E}}
\newcommand{\Sup}{\displaystyle\sup}
\newcommand{\Frac}{\displaystyle\frac}
\newcommand{\supp}{\operatorname{supp}} 
\title{Adaptive sparse grids for time dependent Hamilton-Jacobi-Bellman equations in stochastic control}
\titlerunning{Sparse grids for Hamilton-Jacobi-Bellman equations}
\author{Xavier Warin }
\institute{X. Warin \at EDF R\&D \& FiME, Laboratoire de Finance des March\'es de l'Energie (www.fime-lab.org)\\
           Tel: +33-1-47654184\\
           \email{xavier.warin@edf.fr}
           }
\begin{document}
\maketitle
\begin{abstract} 
We introduce some sparse grids interpolations used in Semi-Lagrangian schemes for linear and fully non-linear diffusion Hamilton Jacobi Bellman equations arising in stochastic control. 
We  prove that the method introduced converges toward the viscosity solution of the problem and we show that some potentially high order schemes can be efficiently implemented. Numerical test in dimension $2$ to $5$ are achieved and show that deterministic methods can be used efficiently in stochastic control  in moderate dimension.
\end{abstract}

\section{Introduction}
\label{intro}
We are interested in a classical stochastic control problem whose value function is solution of the following Hamilton Jacobi equation:
\begin{eqnarray}
\frac{\partial v}{\partial t}(t,x) &  -&  \inf_{a \in \mathop{A}} \left( \frac{1}{2} tr(\sigma_a(t,x)\sigma_a(t,x)^T D^2 v(t,x)) + b_a(t,x) D v(t,x) \right.  \nonumber  \\
  &  & \left .  + c_a(t,x) v(t,x)+ f_a(t,x) \vphantom{\int_t} \right)  =  0  \mbox{ in  } \mathop{Y} \nonumber  \\
v(0,x)& = & g(x) \mbox{ in } \R^d
\label{hjb}
\end{eqnarray}
where $Y := [0,T] \times \R^d$, $\mathop{A}$ is a complete metric space.
$\sigma_a(t,x)$  is a $d \times q$ matrix   so  that $\sigma_a(t,x) \sigma_a(t,x)^T$ is a $d \times d$ symmetric matrix. The  $b_a$, $c_a$ and $f_a$ coefficients are functions defined on  $Y$ with values ​​respectively in $\R^d$,  $\R$ and $\R$.\\
Let's introduce an $\R^d$-valued controlled process $X^{x,t}_s$ defined on a filtered probability space $\left(\Omega,\mathcal{F},\mathbb{F},\mathbb{P}\right)$ by
\begin{eqnarray*}
dX^{x,t}_s & = & b_a(t,X^{x,t}_s) ds + \sigma_a(s,X^{x,t}_s) dW_s  \\
X^{x,t}_t &=& x 
\end{eqnarray*}
where $a$ is progressively measurable.
This kind of problem arises when  minimizing a cost function  $J(t,x) =  \mathbb{E}[\int_{t}^T f_a(s,X^{x,t}_s) e^{c_a(s,X^{x,t}_s)} ds + g(X^{x,t}_T)]$ with respect to the control $a$. It is well known \cite{soner} that the optimal value $ \hat J(t,x) = \inf_{a} J(t,x,a)$ is a viscosity solution of equation \reff{hjb}.\\
Modified finite difference schemes can be used to treat  this problem  \cite{bonnans1} approximating finite differences with non adjacent points in the stencil such that
the scheme is monotone according to  Barles Souganidis framework \cite{barles3}. Some stochastic approaches based on the resolution of a Second Order Backward Stochastic Differential Equation  have been developed in \cite{warin,xiaolu}. They can tackle high dimension problems (an example in dimension 5 has been explored in \cite{warin}).
Very recently,  it has been proved that the solution of  equation \reff{hjb} admits a probabilistic representation by means of a Backward Stochastic Differential
Equation with positive jumps \cite{pham1}. In this approach, the underlying controlled process  is replaced by a non controlled one and the space of controls is explored randomly by a pure jump process leading to a new algorithm based on regression  \cite{pham2}.\\
In the present article, we explore the case of Semi-Lagrangian methods based on the scheme developed by Camilli Falcone \cite{Camilli}. Some variant have been
developed by Munos Zidani \cite{Zidani} and  Debrabant Jakobsen  \cite{Jakobsen}. In this approach, the brownian motion is discretized taking two values ​​of the order of  $\sqrt{h}$. Starting from an initial grid discretization, the algorithm needs to interpolate the function at some points outside the grid.
Recently it has been proved in \cite{warin1,warin2} that the monotonicity of the scheme can be somewhat relaxed leading to locally high order schemes converging to the viscosity
solution of the problem. In fact, Semi-Lagrangian schemes are intrinsically monotone due to the time discretization of the problem.
The interpolation  only brings an error independent of the time discretization. This error can be controlled during the time step iterations if the interpolator
doesn't bring too many oscillations. The idea in  \cite{warin1,warin2} is to interpolate the solution  on an $hp$ finite element basis and truncate it when the interpolated solution oscillates too much. This approach used in a parallel framework  (see \cite{warin1} for details) permits to tackle problems in dimension 3 or 4 at most when the dimension of the control space is limited to one.\\
In order to treat moderate or  high dimension problems, the classical interpolation process is too costly and the number of discretization points grows up exponentially with  the dimension $d$ of the problem, being equal to $N^d$ where $N$ is the number of points in one dimension.  This ``curse of the dimension'' can be mitigated for rather smooth function by using sparse grid methods. 
The sparse grid method has been first introduced in \cite{zwenger} for partial differential equations and the idea can be traced in the Russian literature for quadrature and interpolation in \cite{smolyack}. Recently sparse grids have been used successfully in many problems in high  dimension (see the overview articles \cite{bungartz1,pfluger}).
 The main idea of the sparse grids consists in removing point of the full grid points that are not necessary for
a good representation
of regular functions. When the function is null at the boundary of a cube $[0,1]^d$, and noting $N$ the number of points in each direction of the corresponding full grid,
the number of points can be reduced to $O( N log(N)^{d-1})$ and the error in the infinite norm only deteriorates from $O(N^{-2})$ in the case of full grid to $O(N^{-2} \log(N)^{d-1})$ for the sparse case with linear interpolators.
Of course it can only be achieved for regular function. 
In fact it has been shown in \cite{bungartz2} that the rate of convergence of sparse grids using local hat function was
directly linked to the cross directives of the function interpolated. When the solution is not smooth (for example Lipschitz in our problem),  there is no hope
to get an accurate solution if simple sparse grids are used. In order to circumvent this problem, adaptive sparse grids have been developed in \cite{gerstner}. Using
an estimator of the error associated to the surplus of the hierarchical representation of the solution, effective adaptive methods have been developed to get accurate 
estimation of the solution.
It is to be noticed that this approach is mainly effective if the singularity is located  in a small number of dimensions. See for example \cite{ma} to get examples of the use of adaptive sparse grids in physics.
For the basket option in finance it is well known \cite{reisinger,osterlee}  that a change of coordinate has to be achieved  such that the sparse grids works by increasing
the number of points in the dimension where the singularity lies. \\
As for Hamilton Jacobi Bellman equation with first order derivatives, a first numerical study has been recently achieved with adaptive with sparse grids and linear hat functions 
\cite{bokanowski}.
In this present  paper we first recall the classical regularity results associated to the problem, the time discretization scheme and time convergence results associated.
In a second part, we present the sparse grids method.
We prove that if the adaptation is effective,  the solution calculated converges to the viscosity solution of the problem when a linear sparse interpolator is used. 
The rate of convergence of hat function is low (see \cite{warin1} for test of convergence) and we explain how to implement some potentially locally high order schemes converging towards the viscosity solution
by modifying the algorithms proposed in  \cite{bungartz3,bungartz4} and by using a truncation as explained in \cite{warin1,warin2}. 
As pointed out, on all our tests the truncation modifies only very slightly the solution. 
At last we numerically test the schemes developed and prove their efficiency.

\section{Regularity results and Semi Lagrangian Scheme}
\label{regularityRes}
For a bounded function $w$, we set
\begin{eqnarray*}
 | w|_0  = \sup_{(t,x) \in Y} | w(t,x)|, &  \quad  & [w]_1 = \sup_{(s,x) \ne (t,y)} \frac{ |w(s,x) -w(t,y)|}{|x-y| + |t-s|^{\frac{1}{2}}}
\end{eqnarray*}
and  $|w|_1 = | w|_0 +[w]_1 $. $C_b(Y)$ denote the space of bounded functions on  $Y$ and  $C_1(Y)$ will stand for the space of functions  with a finite  $|\quad |_1$ norm.\\
For  $t$ given, we denote
\begin{eqnarray*}
||w(t,.) ||_{\infty} = \sup_{x \in \R^d } | w(t,x)| 
\end{eqnarray*}
We use the classical assumption on the data of \reff{hjb} for a given $\hat K$: 
\begin{eqnarray}
 \sup_a |g|_1 + |\sigma_a|_1 + |b_a|_1 +  |f_a|_1  + | c_a|_1 \le \hat K
\label{coeff}
\end{eqnarray}

The following proposition \cite{Jakobsen} gives us the existence of a solution in the space of bounded Lipschitz functions
\begin{proposition}
If the coefficients of the equation  \reff{hjb} satisfy \reff{coeff}, there exists a unique viscosity solution of the equation \reff{hjb}  belonging to  $C_1(Y)$.  If $ u_1 $ and $u_2$ are  respectively sub and super-solution of equation  \reff{hjb} satisfying  $u_1(0,.) \le u_2(0,.)$  then  $u_1 \le u_2$.
\end{proposition}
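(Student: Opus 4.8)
The plan is to prove the comparison assertion first — uniqueness and the stated ordering of sub- and super-solutions are then immediate — and afterwards to obtain existence of a solution in $C_1(Y)$ by Perron's method, using \reff{coeff} to build the required barriers.

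\textbf{Comparison.} Let $u_1$ be a bounded upper semicontinuous subsolution and $u_2$ a bounded lower semicontinuous supersolution of \reff{hjb} with $u_1(0,\cdot)\le u_2(0,\cdot)$. First I would replace $u_1$ by $u_1^\eta(t,x):=u_1(t,x)-\eta t-\beta(1+|x|^2)^{1/2}$ with $\beta>0$ chosen small (depending on $\eta$): since $b_a$, $\sigma_a$ are bounded and $u_1$, $u_2$ are bounded, this does not destroy the (now strict) subsolution property, while the extra term is coercive as $|x|\to\infty$; it therefore suffices to prove $u_1^\eta\le u_2$ for every $\eta>0$ and let $\eta\downarrow 0$. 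Suppose, for contradiction, that $M:=\sup_Y(u_1^\eta-u_2)>0$. I would then double the variables, maximizing
$$\Phi_\varepsilon(t,x,s,y)=u_1^\eta(t,x)-u_2(s,y)-\frac{|x-y|^2}{2\varepsilon}-\frac{|t-s|^2}{2\varepsilon}$$
over $\bar Y\times\bar Y$; the penalization forces the maximum to be attained at some point, which stays away from $\{t=0\}\cup\{s=0\}$ because $M>0$ and $u_1^\eta(0,\cdot)\le u_2(0,\cdot)$. Applying the Crandall--Ishii maximum principle for semicontinuous functions produces second order jets $(p,X)$ and $(p,Y)$ of $u_1^\eta$ at $(t,x)$ and of $u_2$ at $(s,y)$ with $|p|=O(|x-y|/\varepsilon)$ and $X\le Y$ in the sense of the block-matrix inequality at scale $1/\varepsilon$. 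Writing the two viscosity inequalities and subtracting them at a control $a$ near-optimal for the supersolution inequality (the subsolution inequality holding for every $a$), the zeroth order terms are controlled by $|c_a|_1\le\hat K$, the first order terms by $|b_a|_1\le\hat K$, and the decisive second order difference $tr(\sigma_a(t,x)\sigma_a(t,x)^TX)-tr(\sigma_a(s,y)\sigma_a(s,y)^TY)$ by combining $X\le Y$ with $[\sigma_a]_1\le\hat K$, giving a bound of order $(|x-y|^2+|t-s|^2)/\varepsilon+o(1)$ uniformly in $a$. Hence $\eta\le C(1+M)(|x-y|^2+|t-s|^2)/\varepsilon+o(1)$, and since $(|x-y|^2+|t-s|^2)/\varepsilon\to 0$ as $\varepsilon\downarrow 0$ (the standard estimate on the penalization terms), this forces $\eta\le 0$, a contradiction. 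Therefore $u_1\le u_2$, whence uniqueness in $C_b(Y)$.

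\textbf{Existence and regularity.} For Perron's method I would exhibit a subsolution $\underline u$ and a supersolution $\overline u$ with $\underline u(0,\cdot)=\overline u(0,\cdot)=g$. Using \reff{coeff}, the functions $g(x)\pm K_0 t$ work as soon as $K_0$ (depending only on $\hat K$) dominates $\sup_a|\frac{1}{2} tr(\sigma_a\sigma_a^TD^2g)+b_aDg+c_ag+f_a|$ in the viscosity sense; boundedness of the Lipschitz function $g$ together with \reff{coeff} makes this finite. Perron's method then yields an a priori discontinuous solution $v$ with $\underline u\le v\le\overline u$, and comparison applied to its upper and lower semicontinuous envelopes forces $v$ to be continuous and to be the unique solution, with $|v(t,x)-g(x)|\le K_0t$. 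For the spatial Lipschitz bound I would compare $v$ with the shifted function $x\mapsto v(t,x+h)+[g]_1|h|$, which solves \reff{hjb} with coefficients still bounded by $\hat K$ and dominates $v$ at $t=0$, to get $|v(t,x)-v(t,y)|\le C|x-y|$; the $\frac{1}{2}$-Hölder continuity in $t$ then follows from this spatial bound, boundedness of the coefficients, and the barrier estimate applied from an intermediate time, so that $v\in C_1(Y)$.

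\textbf{Main obstacle.} The delicate step is the second-order part of the comparison proof: making the Crandall--Ishii matrix inequality interact correctly with the merely Lipschitz, control-dependent diffusion $\sigma_a$ \emph{uniformly} over the complete metric space $A$, while the penalization introduced to handle the unboundedness of $\R^d$ must not spoil these estimates. Once comparison is available, the barriers, Perron's method, and the propagation of the Lipschitz and $\frac{1}{2}$-Hölder bounds are routine.
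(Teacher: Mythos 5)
The paper does not prove this proposition at all: it is quoted from Debrabant and Jakobsen \cite{Jakobsen}, so there is no in-paper argument to compare yours against. Your sketch follows the canonical route that the cited literature itself uses — doubling of variables with a coercive penalization and the Crandall--Ishii lemma for comparison on the unbounded domain, Perron's method between barriers for existence, and a translation/comparison argument for the spatial Lipschitz bound. The comparison half is sound in outline: the interplay between the block matrix inequality and the uniform bound $[\sigma_a]_1\le\hat K$ is exactly the classical computation, and uniformity over the complete metric space $A$ costs nothing since every constant is controlled by $\hat K$ from \reff{coeff}. One secondary point there: since \reff{coeff} does not force $c_a\le 0$, the zeroth-order term $c_a(t,x)u_1(t,x)-c_a(s,y)u_2(s,y)$ at the doubled maximum contains a contribution of order $\hat K M$ that does \emph{not} vanish with the penalization, so your bound $\eta\le C(1+M)(|x-y|^2+|t-s|^2)/\varepsilon+o(1)$ is not quite what comes out; the standard repair is the change of unknown $v=e^{\lambda t}w$ with $\lambda>\sup_a|c_a|_0$ before doubling.

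The genuine flaw is in the existence/regularity half. Your barrier $g(x)\pm K_0t$ requires $K_0$ to dominate $\sup_a|\frac12 tr(\sigma_a\sigma_a^TD^2g)+b_aDg+c_ag+f_a|$, and you assert this is finite because $g$ is bounded and Lipschitz. It is not: \reff{coeff} gives no control whatsoever on $D^2g$, and for a genuinely Lipschitz (non-$C^{1,1}$) initial condition with nondegenerate diffusion no finite $K_0$ makes $g-K_0t$ a viscosity subsolution — a test function touching it from above at a concave kink of $g$ can have arbitrarily negative Hessian, so the term $tr(\sigma_a\sigma_a^TD^2\varphi)$ is unbounded below. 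The standard repair is to mollify: take $g_\delta$ smooth with $\|g_\delta-g\|_\infty\le[g]_1\delta$ and $\|D^2g_\delta\|_\infty\le C/\delta$, use $g_\delta\pm C(\hat K)(1+\delta^{-1})t\pm C\delta$ as barriers, and optimize $\delta\sim\sqrt t$. This yields $|v(t,x)-g(x)|\le C\sqrt t$, not $K_0t$ — which is precisely why the seminorm $[\,\cdot\,]_1$ in the statement carries the exponent $\frac12$ in time. Your claimed linear-in-$t$ bound is false in the generality of \reff{coeff}, and the $\frac12$-Hölder estimate must be derived through the mollified barriers rather than deduced from it. With those two corrections the argument closes and reproduces the cited result.
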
 
The equation \reff{hjb} is discretized in time by the time scheme proposed by Camilli Falcone \cite{Camilli}  for a time step $h$.
\begin{eqnarray}
v(t+h,x) & = &  \inf_{a \in \mathop{A}}  \left[ \sum_{i=1}^q \frac{1}{2q} (v(t, \phi^{+}_{a, h, i}(t,x)) +  v(t, \phi^{-}_{a, h, i}(t,x)) )
 +  f_a(t,x) h  \right. \nonumber \\ 
&  & \left. + c_a(t,x) h v(t,x) \vphantom{\int_t}  \right]  \nonumber \\
    & := & v(t,x)+ \inf_{a \in \mathop{A}}   L_{a,h}(v)(t,x)   \label{hjbCamilli}  
\end{eqnarray}

with
\begin{eqnarray*}
L_{a,h}(v)(t,x) & = & \sum_{i=1}^q \frac{1}{2q} (v(t, \phi^{+}_{a, h, i}(t,x)) + v(t, \phi^{-}_{a, h, i}(t,x)) -  2 v(t,x))  \nonumber \\
              &    & + h c_a(t,x) v(t,x) + h f_a(t,x)  \\
\phi^{+}_{a, h, i}(t,x) & =& x +b_a(t,x) h + (\sigma_a)_i(t,x) \sqrt{h q} \\
\phi^{-}_{a, h, i}(t,x) & =& x +b_a(t,x) h - (\sigma_a)_i(t,x) \sqrt{h q} 
\end{eqnarray*}
where  $(\sigma_a)_i$ is the $i$-th column of $\sigma_a$. We note that it is also possible to choose other types of discretization  as those defined
in \cite{Zidani}.\\
In order to define the solution  at each date,  a condition on the value chosen for $v$ between $0$ and $h$ is required. We choose a time linear interpolation once the solution has been calculated at date  $h$:
\begin{eqnarray}
v(t,x) = (1- \frac{t}{h})  g(x)+  \frac{t}{h} v(h,x), \forall t \in [0,h]. 
\label{hjbCamilli1}
\end{eqnarray}
We denote  $v_h$  the  discrete solution obtained for a discretization with a time step  $h$. Following \cite{Jakobsen}
\begin{proposition}
\label{converDisH}
  The solution $v_h$ of equations \reff{hjbCamilli} and \reff{hjbCamilli1} is uniquely defined and belongs to   $C^1(Y)$. We check that if $ h \le (16 \sup_a \left \{ |\sigma_a|_1^2 + | b_a|_1^2 +1 \right \} \wedge 2 \sup_a |c_a|_0)^{-1} $, there exists $C$ such that 
\begin{eqnarray*}
| v -v_h |_0 \le C h^{\frac{1}{4}} 
\end{eqnarray*}
Moreover, there exists  $C$ independent of $h$ such that 
\begin{eqnarray}
\label{LipschitVh}
| v_h|_0 & \le & C \\
|v_h(t,x)-v_h(t,y)| & \le & C |x-y|, \forall (x,y) \in Y^2
\end{eqnarray}
\end{proposition}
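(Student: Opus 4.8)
The plan is to establish the four assertions in turn: well-posedness, the uniform sup-norm bound, the uniform space-Lipschitz bound (which I expect to be the delicate point), and finally the $h^{1/4}$ rate together with membership in $C^1(Y)$. To begin with, \reff{hjbCamilli}--\reff{hjbCamilli1} is an \emph{explicit} recursion, the right-hand side involving $v_h$ only at the previous time slice; starting from $v_h(0,\cdot)=g$ one therefore defines $v_h$ on the grid $\{nh\}_{n}$ and extends it by linear interpolation in time, \reff{hjbCamilli1} being the step $n=0$. For fixed $a$ the bracket in \reff{hjbCamilli} is a convex combination of the $2q$ values $v_h(t,\phi^{\pm}_{a,h,i}(t,x))$ — the weights $\tfrac{1}{2q}$ summing to $1$ — perturbed by $h\,c_a(t,x)v_h(t,x)+h\,f_a(t,x)$, so by \reff{coeff} its modulus is at most $(1+h\hat K)\|v_h(t,\cdot)\|_\infty+h\hat K$. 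Taking the infimum over $a$ and then the supremum over $x$ gives $\|v_h(t+h,\cdot)\|_\infty\le(1+h\hat K)\|v_h(t,\cdot)\|_\infty+h\hat K$, and a discrete Gronwall estimate over the $T/h$ steps yields $|v_h|_0\le e^{\hat K T}(\|g\|_\infty+1)=:C$, independent of $h$ and preserved by the time interpolation; in particular $v_h$ is uniquely and finitely defined.

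For the space-Lipschitz bound, let $L(t)$ denote the Lipschitz constant of $v_h(t,\cdot)$. Given $x\neq y$ and $\varepsilon>0$, I would pick an $\varepsilon$-optimal control $a$ for $v_h(t+h,y)$ in \reff{hjbCamilli} and use it in the infimum defining $v_h(t+h,x)$, obtaining
\begin{eqnarray*}
v_h(t+h,x)-v_h(t+h,y) & \le & \sum_{i=1}^q\frac{1}{2q}\big(D^{+}_i+D^{-}_i\big)+h\big(c_a(t,x)v_h(t,x)-c_a(t,y)v_h(t,y)\big) \\
 & & {}+h\big(f_a(t,x)-f_a(t,y)\big)+\varepsilon ,
\end{eqnarray*}
with $D^{\pm}_i:=v_h(t,\phi^{\pm}_{a,h,i}(t,x))-v_h(t,\phi^{\pm}_{a,h,i}(t,y))$. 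Here is the obstacle: estimating $|D^{\pm}_i|\le L(t)\,|\phi^{\pm}_{a,h,i}(t,x)-\phi^{\pm}_{a,h,i}(t,y)|\le L(t)|x-y|(1+\hat K h+\hat K\sqrt{hq})$ yields a growth factor $1+O(\sqrt h)$ per time step, hence the useless bound $\exp\!\big(O(\sqrt h)\,T/h\big)$. I would circumvent it exactly as in the It\^o proof of propagation of Lipschitz regularity for the controlled diffusion: read $\sum_i\tfrac{1}{2q}(D^{+}_i+D^{-}_i)$ as $\E\big[v_h(t,\Xi_x)-v_h(t,\Xi_y)\big]$, where $\Xi_x$ equals one of the $2q$ points $\phi^{\pm}_{a,h,i}(t,x)$ with probability $\tfrac{1}{2q}$ and the \emph{same} random index $(\pm,i)$ is used for $\Xi_y$. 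By Cauchy--Schwarz this is at most $L(t)\,\big(\E|\Xi_x-\Xi_y|^2\big)^{1/2}$, and since the $\pm\sqrt{hq}(\sigma_a)_i$ contributions to $\Xi_x-\Xi_y$ have mean zero, the cross term drops out and
\begin{eqnarray*}
\E|\Xi_x-\Xi_y|^2 & = & \big|(x-y)+(b_a(t,x)-b_a(t,y))h\big|^2+h\,\|\sigma_a(t,x)-\sigma_a(t,y)\|^2 \\
 & \le & |x-y|^2(1+Ch)
\end{eqnarray*}
by \reff{coeff}; the restriction on $h$ in the statement is what keeps $C$ and the constants below harmless. Treating the $c_a$ and $f_a$ terms with \reff{coeff}, the sup-norm bound and $|v_h(t,x)-v_h(t,y)|\le L(t)|x-y|$, then letting $\varepsilon\downarrow0$ and exchanging $x$ and $y$, I would arrive at $L(t+h)\le(1+Ch)L(t)+Ch$ with $C=C(\hat K,T)$. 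A discrete Gronwall estimate over the time steps then gives $L(nh)\le e^{CT}(L(0)+1)$, uniform in $h$ and preserved by the time interpolation, which is precisely \reff{LipschitVh} and the spatial-Lipschitz inequality. The crux, as expected, is this passage from the naive $1+O(\sqrt h)$ factor to the harmless $1+O(h)$ one, obtained through the second-moment identity and the cancellation of the odd increments.

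It remains to obtain the time regularity, hence $v_h\in C^1(Y)$, and the convergence rate. From \reff{hjbCamilli} and the spatial Lipschitz bound, $|\phi^{\pm}_{a,h,i}(t,x)-x|=O(\sqrt h)$ gives $|v_h(t+h,x)-v_h(t,x)|=|\inf_a L_{a,h}(v_h)(t,x)|\le C\sqrt h$; telescoping over the time grid and using the linear interpolation on each interval then yields $|v_h(s,x)-v_h(t,x)|\le C_h|t-s|^{1/2}$ for each fixed $h$, so $[v_h]_1<\infty$ and $v_h\in C^1(Y)$. For the estimate $|v-v_h|_0\le Ch^{1/4}$ I would invoke the Barles--Jakobsen framework for the rate of convergence of monotone approximation schemes, whose ingredients are all available here: stability (the sup-norm bound above); monotonicity for $h$ satisfying the stated bound (the scheme is monotone in the values away from a given point and proper once $h\sup_a|c_a|_0\le\tfrac12$); consistency with \reff{hjb}, since a Taylor expansion of $L_{a,h}(\phi)$ for smooth $\phi$ reproduces $h$ times the spatial operator of \reff{hjb} up to an $O\big(h^2\|\phi\|_{C^4}\big)$ error — the odd powers of $\pm\sqrt{hq}(\sigma_a)_i$ cancelling between the $\phi^{+}$ and $\phi^{-}$ branches; and the comparison principle of Proposition~1, the solution lying in $C_1(Y)$. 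The two one-sided estimates — the harder one obtained through Krylov's shaking-of-coefficients regularization, by mollifying $v$ at scale $\eta$, producing a smooth sub-/super-solution of a slightly perturbed equation which is also admissible for the scheme, inserting it into the scheme, using stability and optimizing $\eta$ against $h$ — then combine to give $|v-v_h|_0\le Ch^{1/4}$, the exponent $\tfrac14$ being the worse of the two; we refer to \cite{Jakobsen} for these technical points.
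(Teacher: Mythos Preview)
The paper does not give its own proof of this proposition; it is stated immediately after the phrase ``Following \cite{Jakobsen}'' and the result is taken from Debrabant--Jakobsen. Your sketch is correct and follows precisely the approach of that reference: the explicit recursion plus discrete Gronwall for well-posedness and the sup-norm bound, the coupling/second-moment trick to upgrade the naive $1+O(\sqrt h)$ per-step Lipschitz growth to $1+O(h)$ (you correctly identify this as the crux, and your computation of $\E|\Xi_x-\Xi_y|^2$ via the cancellation of the odd $\pm\sqrt{hq}(\sigma_a)_i$ increments is exactly the right mechanism), and finally the Barles--Jakobsen framework with Krylov's shaking of coefficients for the $h^{1/4}$ rate.
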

It is clear that if the scheme \reff{hjbCamilli} is used at some discretized point $x$  of a grid, then some interpolation are needed for the calculation of $L_{a,h}(v)(t,x)$.
We next develop the interpolation method based on sparse grids for different function basis.

\section{Classical Sparse Grids Interpolation}
\label{classicalSG}
We recall some classical results on sparse grids that can be found in \cite{pfluger}. 
\subsection{Linear case}
\label{linearSection}
We first assume that the function we interpolate is null at the boundary.
By a change of coordinate an hyper-cube domain can be changed to a domain $\omega = [0,1]^d$.
Introducing the hat function $\phi^{(L)}(x)= \max(1-|x|,0)$,  we obtain the following local one dimensional  hat function  by translation and dilatation  
$$\phi_{l,i}^{(L)}(x) = \phi^{(L)}(2^l x-i)$$
depending on the level $l$ and the index $i$, $0< i< 2^l$. 
The grid points used for interpolation are noted  $x_{l,i}=2^{-l}i$. 
In dimension $d$, we introduce the basis functions 
$$ \phi_{\underline l, \underline i}^{(L)}(x) = \prod_{j=1}^d \phi_{l_j,i_j}^{(L)}(x_j)$$
via a tensor approach  for a point $ \underline x = (x_1, ....x_d)$, a multi-level $ \underline l := (l_1, .., l_d)$ and a multi-index $\underline i := (i_1,.. , i_d).$ 
The grid points used for interpolation are thus $x_{\underline l,\underline i} := (x_{l_1,i_1},.., x_{l_d,i_d})$.\\
We next introduce the  index set $$B_{\underline l} := \left \{ \underline i: 1 \le i_j \le 2^{l_j}-1,  i_j \mbox{ odd }, 1 \le j \le d \right\}$$
 and the space of hierarchical basis
\begin{eqnarray*}
 W_{\underline l}^{(L)}&:=& span\left \{ \phi^{(L)}_{\underline l, \underline i}(\underline x): \underline i \in B_{\underline l} \right\} 
\end{eqnarray*}
A representation of the space $W_{\underline l}^{(L)}$ is given in dimension 1 on figure \ref{figWFunction1D}. 
\begin{figure}[h]
\centering
\includegraphics[width=5cm]{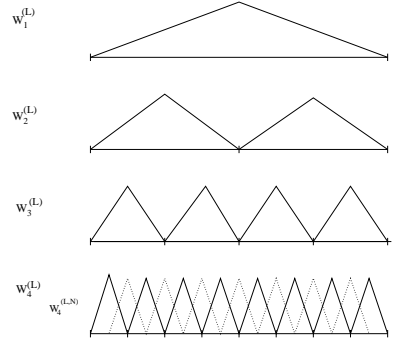}
\caption{One dimensional $W^{(L)}$ spaces : $W_1^{L)}$, $W_2^{(L)}$, $W_3^{(L)}$, $W_4^{(L)}$ and the nodal representation $W^{(L,N)}_4$ }
\label{figWFunction1D}
\end{figure}
The sparse grid space is defined as :
\begin{eqnarray*}
V_n &=& \underset{|\underline l|_1 \le n+d-1}{\oplus} W_{\underline l}
\label{classSparse}
\end{eqnarray*}
\begin{remark}
The conventional full grid space is defined as  $V_n^F =  \underset{|\underline l|_\infty \le n}{\oplus} W_{\underline l}$
\end{remark} 
At a space of hierarchical increments $W_{\underline l}^{(L)}$ corresponds a space of nodal function $W_{\underline l}^{(L,N)}$ such that
\begin{eqnarray*}
 W_{\underline l}^{(L,N)} &:=& span\left \{ \phi^{(L)}_{\underline l, \underline i}(\underline x): \underline i \in B^N_{\underline l} \right\}
\end{eqnarray*}
with  $$B^N_{\underline l} := \left \{ \underline i: 1 \le i_j \le 2^{l_j}-1,   1 \le j \le d \right\}.$$
On figure \ref{figWFunction1D} the one dimensional nodal base $W_4^{(L,N)}$ is spawned by $W_4^{(L)}$ and the dotted basis function.
The space $V_n$ can be represented as the space spawn by the $W^{(L,N)}_{\underline l}$ such that $|\underline l|_1= n+d-1$:
\begin{eqnarray}
V_n &=&  span\left  \{ \phi^{(L)}_{\underline l, \underline i}(\underline x): \underline i \in B_{\underline l}^N ,  |\underline l|_1 = n+d-1 \right \}
\label{classNodl}
\end{eqnarray}
A function $f$ is interpolated on the hierarchical basis as 
\begin{eqnarray}
I^{(L)}(f) &=& \sum_{|\underline l|_1 \le n+d-1,  \underline i \in B_{\underline l}} \alpha_{\underline l, \underline i}^{(L)} \phi^{(L)}_{\underline l, \underline i}
\label{basisRecons}
\end{eqnarray}
where the coefficient of the hierarchical basis $\alpha_{\underline l, \underline i}^{(L)}$ are called the surplus (we give on figure \ref{figWSurplus1D} a representation of these coefficients).
\begin{figure}[h]
\centering
\includegraphics[width=6cm]{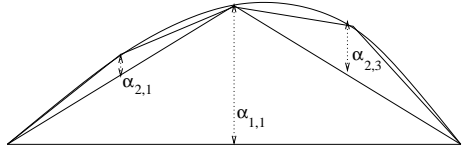}
\caption{Example of hierarchical coefficients}
\label{figWSurplus1D}
\end{figure}
These surplus associated to a function $f$ are calculated  in the one dimension case for a node $m = x_{l,i}$  as the difference of the value of the function at the node and the linear representation of the function calculated with neighboring nodes. For example on figure  \ref{figFatherTree}, the hierarchical value is given by the relation :
\begin{eqnarray*}
\alpha^{(L)}(m) := \alpha_{l, i}^{(L)} = f(m) -0.5 (f(e(m)) + f(w(m)))
\end{eqnarray*}
where $e(m)$ is the east neighbor of $m$ and $w(m)$ the west one.
The procedure is generalized in $d$ dimension by successive hierarchization in all the directions.
\begin{figure}[h]
\centering
\includegraphics[width=6cm]{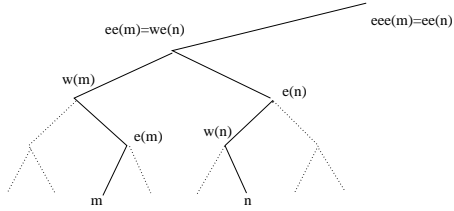}
\caption{Node involved in linear, quadratic and cubic representation of a function at node $m$ and $n$}
\label{figFatherTree}
\end{figure}
On figure \ref{figureSubSpace2D}, we give a representation of the $W$ subspace for $\underline l \le 3$ in dimension 2.\\
\begin{figure}[h]
\centering
\includegraphics[width=6cm]{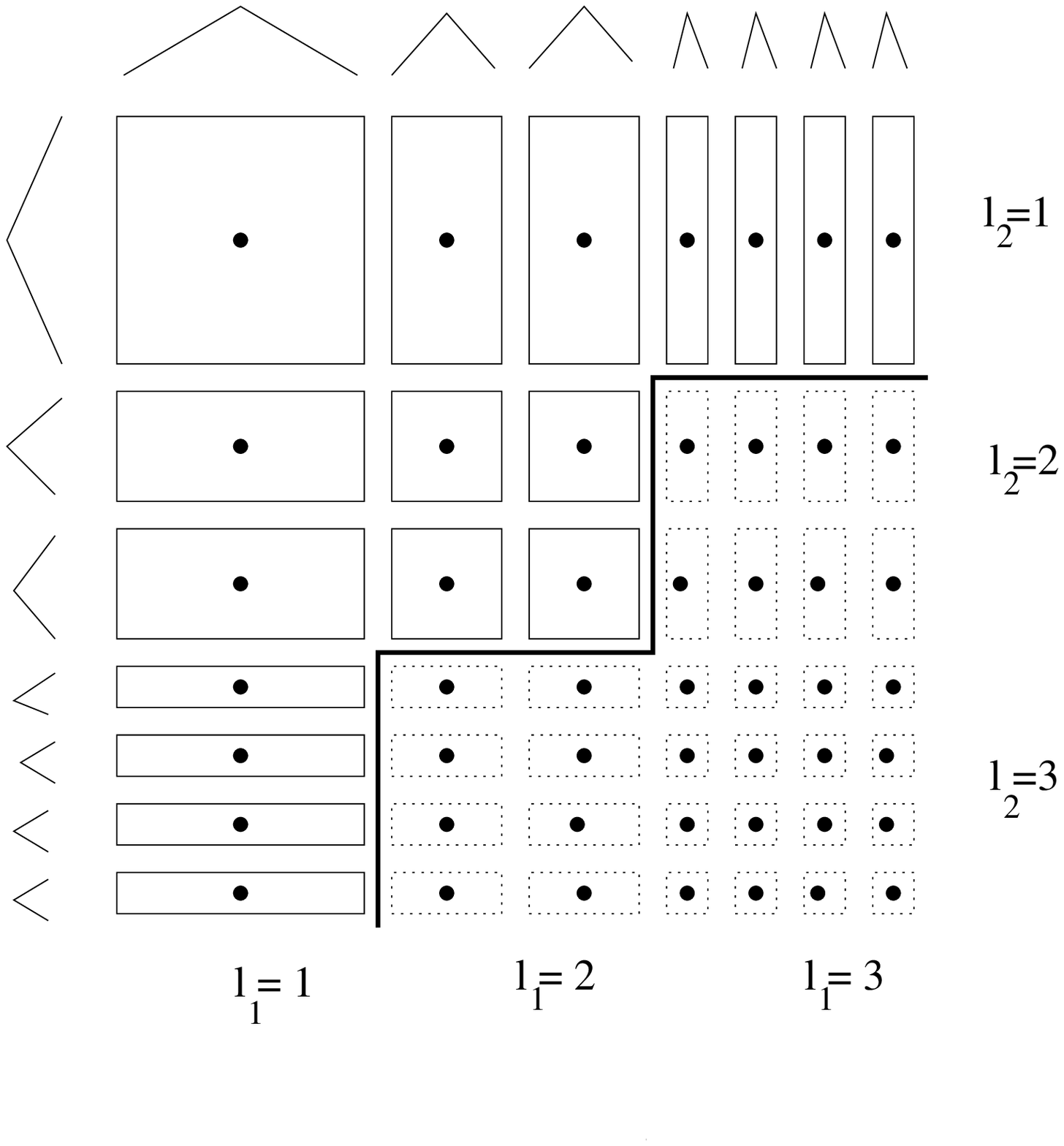}
\caption{The two dimensional subspace $W_{\underline l}^{(L)}$ up to $l=3$ in each dimension. The supplemental hierarchical functions corresponding to  an approximation
on the full grid are given in dashed lines.}
\label{figureSubSpace2D}
\end{figure}
In order to deal with functions not null at the boundary, two more basis are added to the first level as shown on figure \ref{figWFunction1DBound}.
\begin{figure}[h]
\centering
\includegraphics[width=5cm]{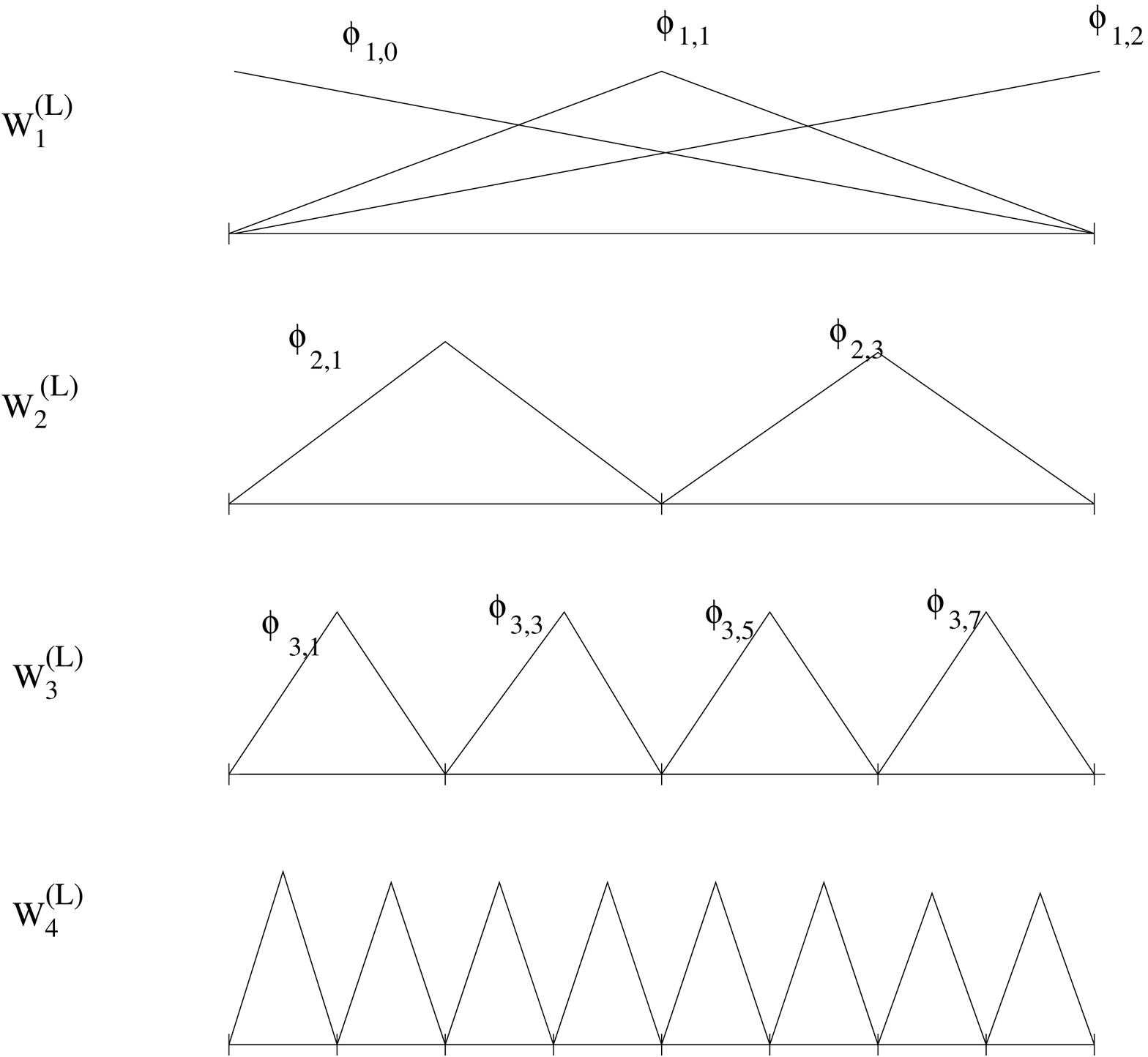}
\includegraphics[width=5cm]{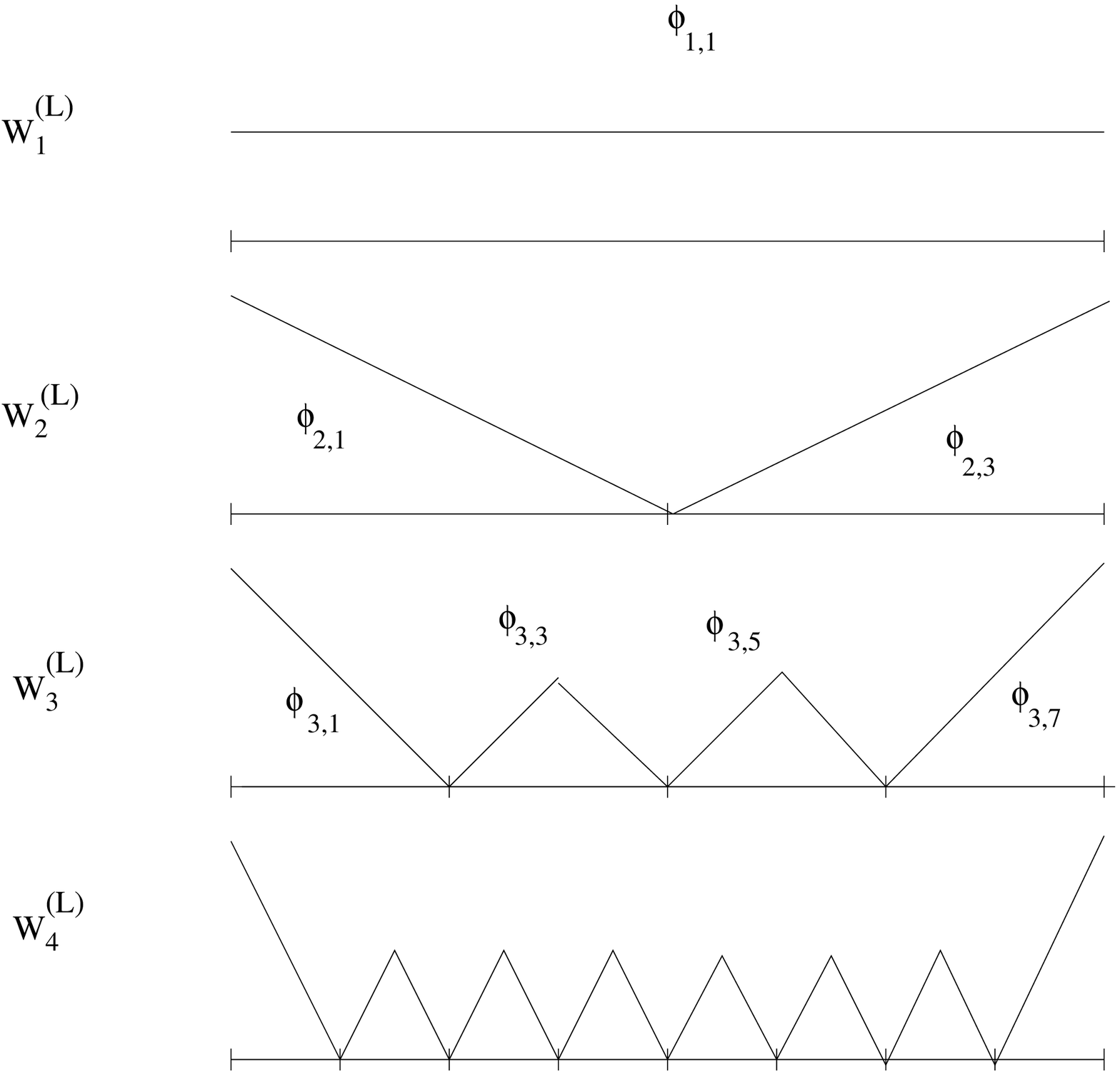}
\caption{One dimensional $W^{(L)}$ spaces  with linear functions with ``exact '' boundary (left) and ``modified '' boundary (right) : $W_1^{(L)}$, $W_2^{(L)}$, $W_3^{(L)}$, $W_4^{(L)}$ }
\label{figWFunction1DBound}
\end{figure}
This approach results in many more points than the one without the boundary. As noted in \cite{pfluger} for n =5, in dimension 8 you have nearly 2.8 millions points in this approximation but only 6401 inside the domain.  If the boundary conditions are not important (infinite domain truncated in finance for example) the hat functions near the boundaries are modified by extrapolation (see figure \ref{figWFunction1DBound}) as explained in \cite{pfluger}.
On level 1, we only have one degree of freedom assuming the function is constant on the domain. On all other levels, we extrapolate linearly towards the boundary the left and right basis functions, other functions remaining unchanged. So the new functions basis in 1D $\tilde \phi$ becomes
\begin{equation}
\begin{array}{ccc}
\tilde \phi_{l,i}^{(L)}(x) &=& \left \{ \begin{array}{cc}
                                  1   &   \mbox{ if } l=1 \mbox{ and } i =1 \\
                                   \left \{ \begin{array}{cc}
                                         2 -2^l x &  \mbox{ if } x  \in [0,2^{-l+1}] \\
                                         0        & \mbox{ else}
                                         \end{array} \right \}  & \mbox{ if } l > 1 \mbox{ and } i=1 \\
                                   \left \{ \begin{array}{cc}
                                         2^l (x-1) +2 &  \mbox{ if } x  \in [1-2^{-l+1},1] \\
                                         0        & \mbox{ else }
                                         \end{array} \right \}  & \mbox{ if } l > 1 \mbox{ and } i= 2^l-1 \\
                                   \phi_{l,i}^{(L)}(x) & \mbox{ otherwise } 
                                   \end{array}
                               \right. \nonumber
\end{array}
\end{equation}
The interpolation error associated to the linear operator $I^1 := I^{(L)}$ is linked to the regularity of the cross derivatives of the function \cite{bungartz2,bungartz3,bungartz4}.
If $f$ is null at the boundary and  admits derivatives such that $ || \frac{\partial^{2d} u}{\partial x_1^{2} ... \partial x_d^{2}}||_{\infty}  < \infty$ then
\begin{eqnarray}
|| f - I^1(f)||_\infty =  O(N^{-2} log(N)^{d-1})
\label{ErrorInterpLin}
\end{eqnarray}
\subsection{High order case}
\label{highOrderSection}
As explained in \cite{warin2}, the observed convergence of the Semi Lagrangian method is slow.  Changing the interpolator permits to get a higher rate of convergence mainly in region where the solution is smooth.
Following \cite{bungartz3} and \cite{bungartz4}, it is possible to get higher interpolators. 
Using a quadratic interpolator, the reconstruction on the nodal basis gives a quadratic function
on the support of the previously defined hat function and a continuous function of the whole domain.
The polynomial quadratic basis is defined on $[2^{-l}(i-1),2^{-l}(i+1)]$ by $$\phi_{ l, i}^{(Q)}(x) = \phi^{(Q)}(2^l x-i)$$ with
$\phi^{(Q)}(x)= 1- x^2$. \\
The hierarchical surplus (coefficient on the basis) in one dimension is the difference between the value function at the node and the quadratic representation of the function using nodes available at the preceding level. With the notation of figure  \ref{figFatherTree} 
\begin{eqnarray*}
\alpha(m)^{(Q)} & =&  f(m) -(\frac{3}{8} f(w(m))+\frac{3}{4} f(e(m)) - \frac{1}{8} f(ee(m))) \\
               &= & \alpha(m)^{(L)}(m) -\frac{1}{4}\alpha(m)^{(L)}(e(m))\\
               &= & \alpha(m)^{(L)}(m) -\frac{1}{4}\alpha(m)^{(L)}(df(m))
\end{eqnarray*}
where $df(m)$ is the direct father of the node $m$ in the tree.\\
Once again the quadratic surplus in dimension $d$ is obtained by successive hierarchization in the different dimensions.\\
In order to take into account the boundary conditions, two linear functions $1-x$ and $x$ are added at the first level  (see figure \ref{figWFunction1DQuadBound}).\\
A version with modified boundary conditions can be derived for example by using linear interpolation at the boundary such that 
\begin{equation}
\begin{array}{ccc}
\tilde \phi_{l,i}^{(Q)}(x) &=& \left \{ \begin{array}{cc}
                                  \tilde \phi_{l,i}^{(L)}   &   \mbox{ if } i=1 \mbox{ or } i =  2^l-1 , \\
                                  \phi_{l,i}^{(Q)}(x) & \mbox{ otherwise } 
                                   \end{array}
                               \right. \nonumber
\end{array}
\end{equation}
\begin{figure}[h]
\centering
\includegraphics[width=5cm]{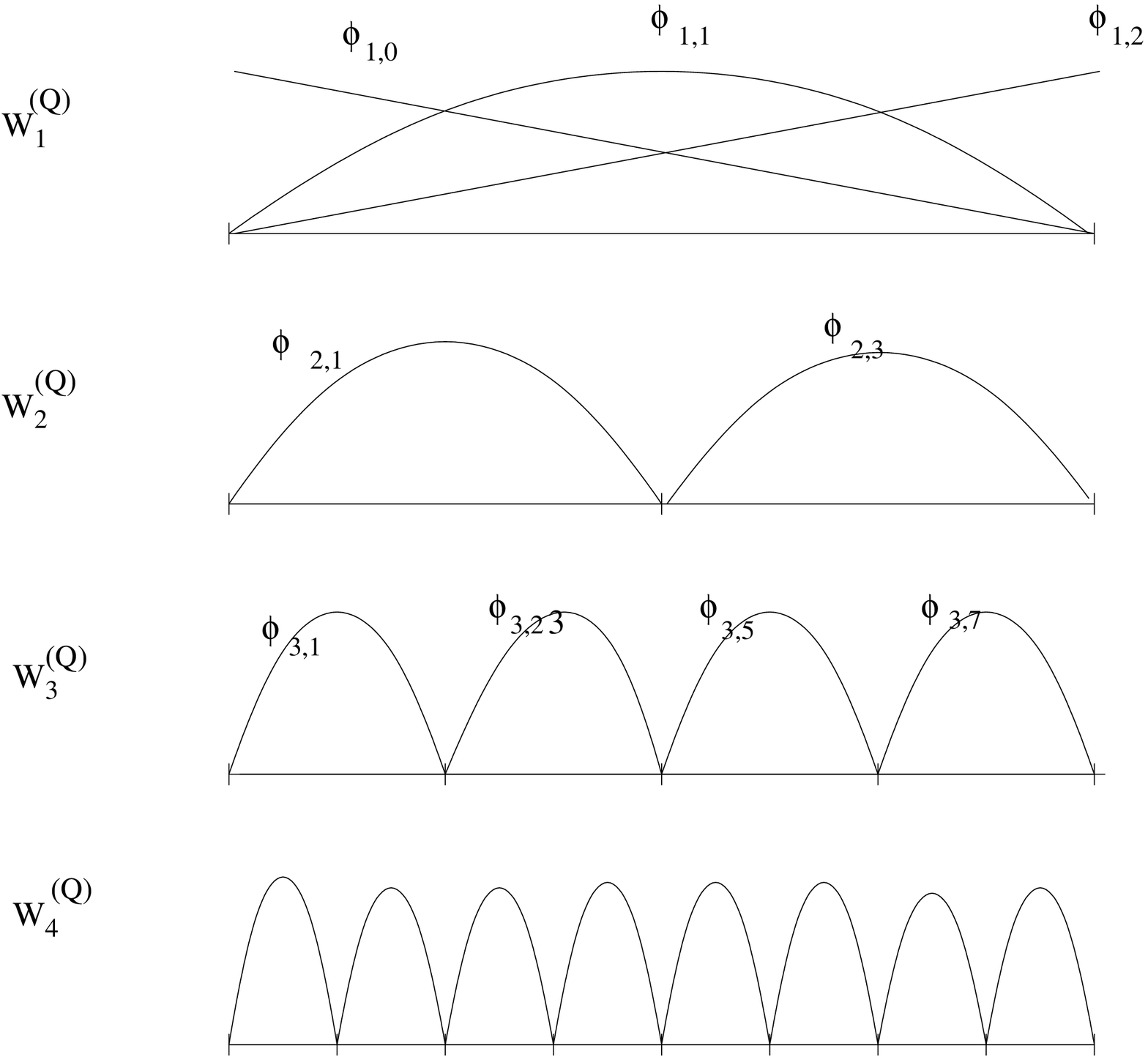}
\includegraphics[width=5cm]{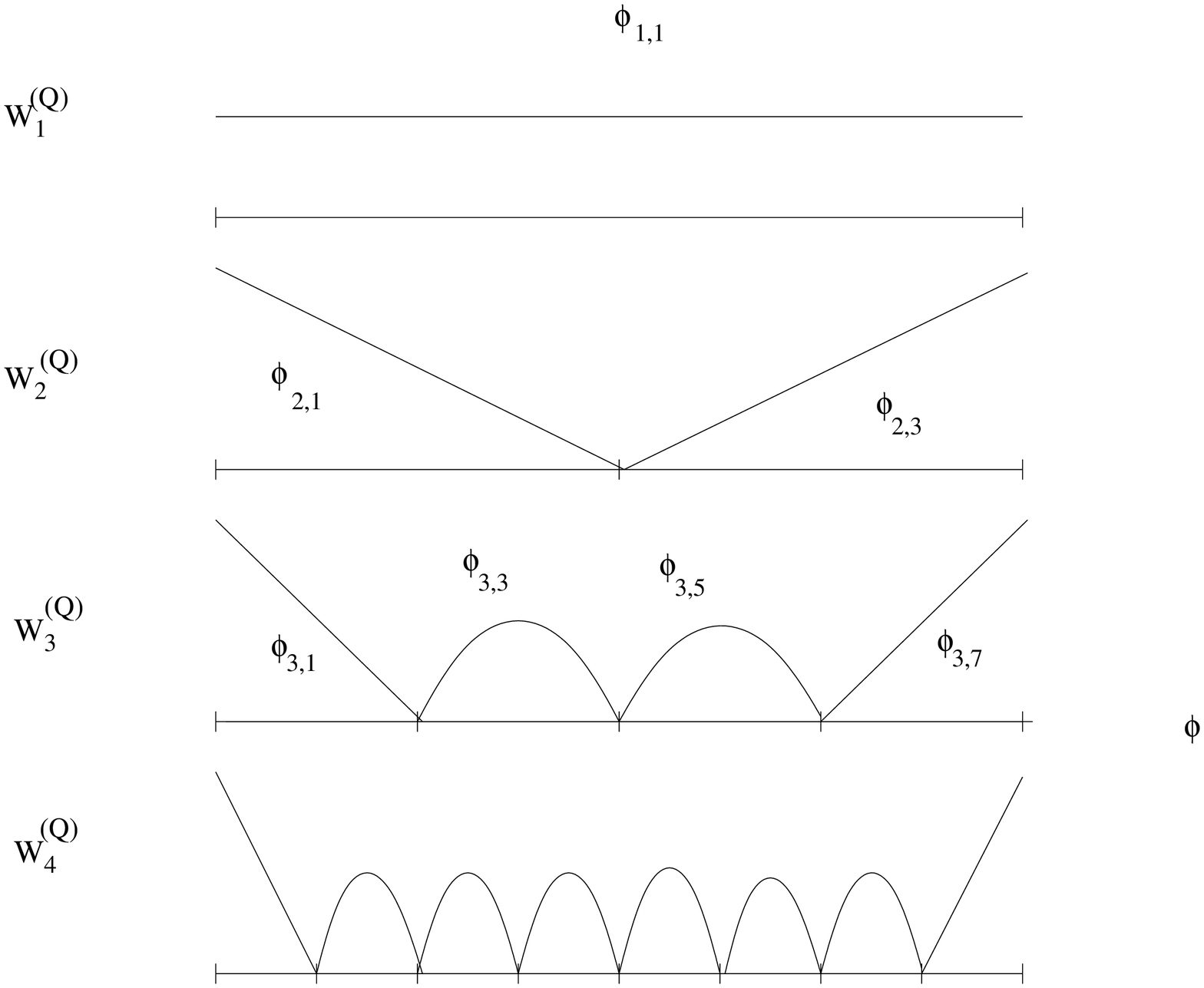}
\caption{One dimensional $W^{(Q)}$ spaces  with quadratic with ``exact'' boundary (left) and ``modified'' boundary (right) : $W_1^{(Q)}$, $W_2^{(Q)}$, $W_3^{(Q)}$, $W_4^{(Q)}$ }
\label{figWFunction1DQuadBound}
\end{figure}
In the case of the cubic representation, on  figure \ref{figFatherTree} we need 4 points to define a function basis. In order to keep the same data structure, we use a cubic function basis  at node $m$ with value 1 at this node and 0 at the node $e(m)$, $w(m)$ and $ee(m)$ and we only keep the basis function between $w(m)$ and $e(m)$ \cite{bungartz3}. \\
Notice that there are two kinds of basis function depending of the position in the tree. The basis functions are given on  $[2^{-l+1}i,2^{-l+1}(i+1)]$ by
\begin{eqnarray*}
\phi_{ l, 2i+1}^{(C)}(x) & = & \phi^{(C),1}(2^l x-(2i+1)) , \mbox{ if } i \mbox{ even }  \\
 & = & \phi^{(C),2}(2^l x-(2i+1)) , \mbox{ if } i  \mbox{ odd } 
\end{eqnarray*}
with $\phi^{(C),1}(x) = \frac{(x^2-1)(x-3)}{3}$, $\phi^{(C),2}(x) = \frac{(1-x^2)(x+3)}{3}$.\\
The coefficient surplus can be defined as before as the difference between the value function  at the node and the cubic representation of the function at the father node. Because of the two basis functions involved there are two kind of cubic coefficient.
\begin{itemize}
\item 
For a node $m = x_{l,8i+1}$ or   $m = x_{l,8i+7}$ , $\alpha^{(C)}(m)  = \alpha^{(C,1)}(m)$, with $$\alpha^{(C,1)}(m) =   \alpha^{(Q)}(m) - \frac{1}{8} \alpha^{(Q)}(df(m))$$ 
\item 
For a node $m = x_{l,8i+3}$ or   $m = x_{l,8i+5}$ , $\alpha^{(C)}(m)  = \alpha^{(C,2)}(m)$, with $$\alpha^{(C,2)}(m) =    \alpha^{(Q)}(m) + \frac{1}{8} \alpha^{(Q)}(df(m))$$
\end{itemize} 
Notice that a cubic representation is not available for $l=1$ so a quadratic approximation is used. 
As before boundary conditions are treated by adding two linear functions basis at the first level and a modified version is available. We choose the following basis functions as defined on figure \ref{figWFunction1DCubicBound} :
\begin{equation}
\begin{array}{ccc}
\tilde \phi_{l,i}^{(C)}(x) &=& \left \{ \begin{array}{cc}
                                  \tilde \phi_{l,i}^{(Q)}   &   \mbox{ if } i \in \{ 1, 3, 2^{l}-3, 2^l-1 \} , \\
                                  \phi_{l,i}^{(C)}(x) & \mbox{ otherwise } 
                                   \end{array}
                               \right. \nonumber
\end{array}
\end{equation}
\begin{figure}[h]
\centering
\includegraphics[width=5cm]{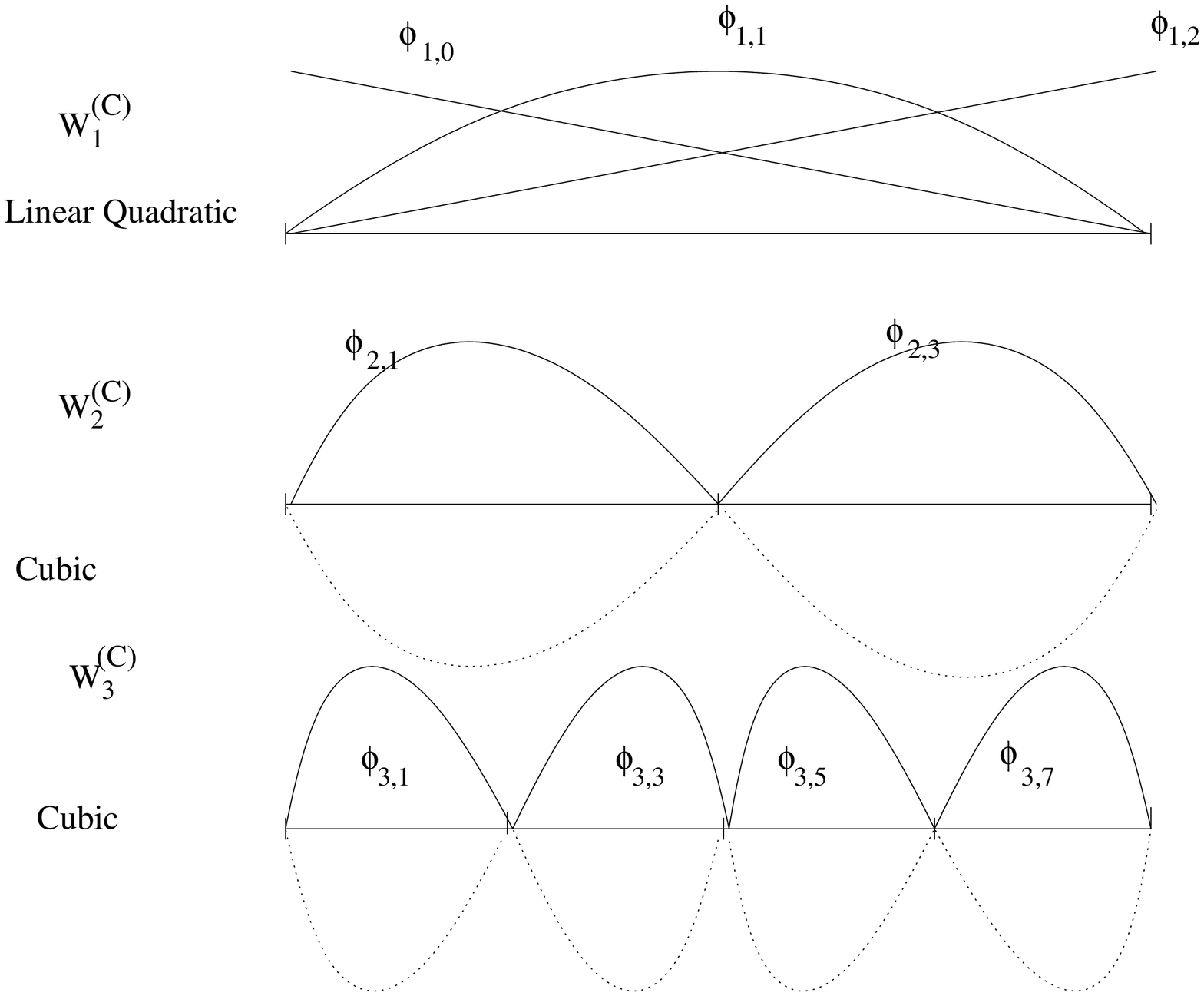}
\includegraphics[width=5cm]{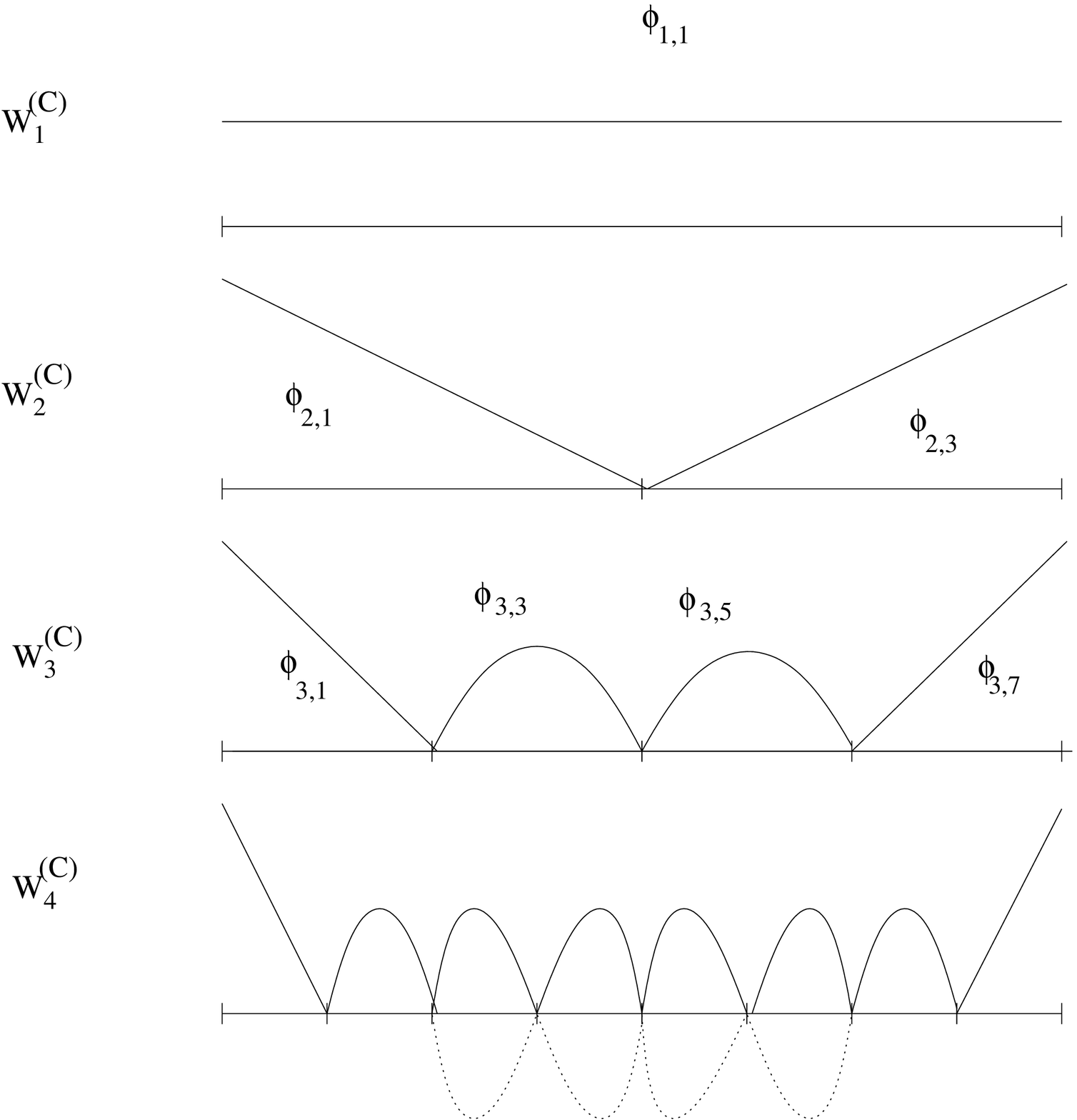}
\caption{One dimensional $W^{(C)}$ spaces  with cubic and ``exact`` boundary (left) and ``modified'' boundary (right) : $W_1^{(C)}$, $W_2^{(C)}$, $W_3^{(C)}$, $W_4^{(C)}$ }
\label{figWFunction1DCubicBound}
\end{figure}

According to  \cite{bungartz2,bungartz3,bungartz4}, if the function $f$  is null at the boundary and admits derivatives such that 
$\sup_{\alpha_i \in \{2,..,p+1\}} \left \{ || \frac{\partial^{\alpha_1+..+\alpha_d} u}{\partial x_1^{\alpha_1} ... \partial x_d^{\alpha_d}}||_{\infty} \right \} < \infty$
then the interpolation error can be generalized for $I^2:= I^{(Q)}$, $I^3 :=  I^{(C)}$ by :
\begin{eqnarray}
|| f - I^p(f)||_\infty =  O(N^{-(p+1)} log(N)^{d-1}), \quad p=2,3
\label{ErrorInterp}
\end{eqnarray}

\section{Truncated interpolator}
\label{truncationSection}
In order to get a higher rate of convergence where the solution is regular, we'd like to use a high order interpolator.
However, the use of a high order interpolator doesn't permit to prove convergence of the calculated solution to the viscosity solution of our problem.
In order to recover this convergence, we will use a truncation similar to the one developed in  \cite{warin1}.
This approach has to be adapted to the sparse grid case.
For high order interpolator (quadratic and cubic) we truncate  the interpolated value at a point $x$ as follows :
let's define for the nodal basis functions $\phi$ :
\begin{eqnarray}
K^n(x) & =& \{(\underline l, \underline i), \mbox{ such that } |\underline l|_1= n+d-1, \underline{i} \in B^N_{\underline l}  , \mbox{ and }  x \in 
\supp \phi_{\underline l, \underline i} \}
\nonumber
\end{eqnarray}
\begin{remark}
By construction, the nodal basis function $\phi^{(L)}$, $\phi^{(Q)}$, $\phi^{C)}$ have the same support.
\end{remark}
Defining the maximum and minimum values taken by a function $f$ at theses nodes
\begin{eqnarray}
\underline{f}(x) & = & \min( f(x_{\underline l, \underline i})/ (\underline l, \underline i) \in K^n(x)) \nonumber \\
\bar{f}(x)      &  = & \max( f(x_{\underline l, \underline i})/ (\underline l, \underline i) \in  K^n(x))  \label{truncation}
\end{eqnarray}
And truncate
\begin{eqnarray*}
I^{p,c}(f)(x) & = &  \underline{f}(x) \vee I^p(f)(x) \wedge \bar{f}(x) 
\end{eqnarray*}
\begin{remark}
We  have of course  $I^{1,c}(f)=I^{1}(f)$ but in the sequel we keep the notation $I^{1,c}$ for genericity.
\end{remark}
When the truncation is really achieved, the rate of the interpolation error \reff{ErrorInterp} cannot be better than the
one obtained by a linear interpolation given by equation \reff{ErrorInterpLin}.
We hope that the truncation will only be really achieved at points where the solution is not  regular.

\section{Spatially adaptive sparse grids}
When the solution is not smooth, typically Lipschitz, there is no hope to get convergence results  for classical Sparse Grids (see above the interpolation error linked to the cross derivatives of the function).
So classical sparse grids have to be adapted such that the solution is refined at points of irregularity. 
In all adaptations methods hierarchical surplus  $\alpha_{\underline l, \underline i}$ are used to get  an estimation of the local error. These coefficients give an estimation of the smoothness of the function value at the discrete points  by representing the discrete mix second derivative of the function.
There is mainly two kinds of adaptation used :
\begin{itemize}
\item the first one is performing local adaptation and only adds points locally \cite{griebel,bungartz1,griebel1,ma},
\item the second one  is performing adaptation at the level of the hierarchical space $W_{\underline l}$ (anisotropic sparse grid).
This approach detects important dimensions  that needs refinement and refines all the points in this dimension \cite{gerstner}. This refinement is also achieved in areas where the solution can be smooth. A more local version has been developed in \cite{roberts}.
\end{itemize}
At the first date, the dimension adaptation developed in \cite{gerstner} is used  to get a good approximation of the function.
Because we want to implement local adaptation with refinement and coarsening, the first kind of adaptation is then used at each time step.\\
\begin{algorithm}
\caption{Algorithm for adaptation}
\label{algoAdapt}
\begin{algorithmic}
\STATE Use dimension adaptation at the first time step 
\STATE Mesh coarsening to initialize the refinement for next step
\FOR{ each time step} 
    \WHILE{precision and maximal level not reached}
    \STATE  Refine 
    \ENDWHILE
    \STATE Store the solution with the refined grid
    \STATE Mesh coarsening  for next time step
\ENDFOR
\end{algorithmic}
\end{algorithm}
The algorithm used during the resolution is given in \ref{algoAdapt}.
Refinement is achieved on
points that have the maximal surplus above the precision required and that are located at the leave of the multidimensional tree. Each point  has $2^d$ sons. As pointed out in \cite{bokanowski} some points constructed may not have ancestors in some dimension. In order to avoid holes in the structure missing fathers are added.\\
At each time step, the adaptation is achieved iteratively if a maximal level of refinement is not reached.
In order to prepare the next time step, the grid is coarsened by deleting points that correspond to leaves in the multidimensional tree if the associated surplus are less then 10 times the precision required.
A recent description of the algorithm of refinement and mesh coarsening can be found  in \cite{bokanowski}.

 \section{Discretized scheme and convergence analysis}
\label{analysisSection}
We take the following noting notations :  
\begin{itemize}
\item  $P_S^N(t)$ is the set of all the points $(\underline l, \underline i)$ , $\underline i \in  B_{\underline l}$, at date $t$ of the adapted Sparse Grid meshing such that in each direction the maximal level is $N$  ($|\underline l|_\infty \le N$)
\item $P_F^N$ is the set of all points $(\underline l, \underline i)$ belonging to the corresponding full grid :
\begin{eqnarray*}
P_F^N & = & \{(\underline l, \underline i) / | \underline l|_\infty \le N, \underline i \in B_{\underline l} \}
\end{eqnarray*}
\end{itemize}
We note  $I^{p,c,Full}_{N} $ the full grid operator on $P_F^N$ such that  
$$I^{p,c,Full}_{N}(f)=  I^{p,c}[ (f(x_{\underline l, \underline i}))_{(\underline l, \underline i) \in P_F^N}],$$
  $p=1$ to $3$ (being linear, quadratic or cubic). With similar notation
$$I^{p,c,S}_{N}(f)=  I^{p,c}[ (f(x_{\underline l, \underline i}))_{(\underline l, \underline i) \in P_S^N}].$$
As we increase the number of points during adaptation while keeping the level below $N$ , the interpolator $I^{p,c,S}_N$  converges towards the 
full grid interpolator $I^{p,c,Full}_{N}$. 
 Using the same scheme as the one defined in \cite{warin1},  for each point of the adapted meshing $x_{\underline l, \underline i} \in P_S^N(t+h)$,
$v_{\underline l, \underline i}(t+h)$  the estimation of the value function at the date $t+h$ and the point $x_{\underline l, \underline i}$ is calculated by the scheme :
\begin{eqnarray}
v_{\underline l, \underline i}(t+h) & =&  v_{\underline l ,\underline i}(t)  \nonumber  \\
& & + \inf_{a \in \mathop{A}}  \left [  ( L_{a,h} I^{p,c,S}_N[(v_{\underline{ll} ,\underline{ii}}(t))_{(\underline{ll} ,\underline{ii}) \in P_S^N(t)}](x_{\underline l ,\underline i})  \right]  \label{hjbInterpGeneral} 
\end{eqnarray}
With this algorithm the solution can be calculated recursively for $t_j= j h$, $j=1, n$. 
An estimation of the solution at date $t$ is given
\begin{eqnarray*}
\tilde v(t,x) & =& I^{p,c,S}_N[ (v_{\underline l, \underline i}(t))_{(\underline l, \underline i) \in  P_S^N(t)} ](x)
\end{eqnarray*}

\subsection{Convergence rate for the linear interpolator}
As we increase the number of points during adaptation while keeping the level below $N$ , the interpolator with no truncation $I^{1,S}_N$  converges towards the 
full grid interpolator $I^{1,Full}_{N}$.
We give some assumptions for the convergence of the adaptation
\begin{assumption}
\label{assump1}
We suppose that  all surplus missing associated to points to obtain the full grid operator are below $\epsilon$ and that $M$ is the number of missing points
\end{assumption}
We get the following result
\begin{lemma}
Under assumption \ref{assump1}, for every Lipschitz function $f$ 
\begin{eqnarray*}
||I^{1,S}_N(f)-f||_\infty  & \le & K 2 ^{-N} + M \epsilon
\end{eqnarray*}
where $M$ is the number of missing points
\end{lemma}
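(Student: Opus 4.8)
The plan is to compare the adapted sparse interpolant $I^{1,S}_N(f)$ with the full grid interpolant $I^{1,Full}_N(f)$, and then to control the full grid error directly from the Lipschitz property of $f$, since the smoothness hypothesis behind \reff{ErrorInterpLin} (bounded mixed second derivatives) is not available for a merely Lipschitz function. The starting observation is that, in the hierarchical hat basis, the surplus $\alpha^{(L)}_{\underline l,\underline i}(f)$ attached to a node $x_{\underline l,\underline i}$ is computed only from the values of $f$ at $x_{\underline l,\underline i}$ and at its father neighbours; it is therefore an intrinsic quantity of $f$, independent of which admissible set of nodes is used, provided that set is downward closed in the tree. The refinement step of Algorithm \ref{algoAdapt} adds the missing fathers, so both $P_F^N$ and $P_S^N(t)$ have this property, whence
\begin{eqnarray*}
I^{1,Full}_N(f)=\sum_{(\underline l,\underline i)\in P_F^N}\alpha^{(L)}_{\underline l,\underline i}(f)\,\phi^{(L)}_{\underline l,\underline i},\qquad I^{1,S}_N(f)=\sum_{(\underline l,\underline i)\in P_S^N(t)}\alpha^{(L)}_{\underline l,\underline i}(f)\,\phi^{(L)}_{\underline l,\underline i}.
\end{eqnarray*}
Subtracting, the difference telescopes to the sum of the hierarchical increments of the missing nodes; by Assumption \ref{assump1} there are $M$ such nodes, each surplus is bounded by $\epsilon$, and $||\phi^{(L)}_{\underline l,\underline i}||_\infty\le 1$, so the triangle inequality over these $M$ terms gives $||I^{1,Full}_N(f)-I^{1,S}_N(f)||_\infty\le M\epsilon$.

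It remains to bound $||I^{1,Full}_N(f)-f||_\infty$. On the full grid the mesh size is $2^{-N}$ in every coordinate, so $I^{1,Full}_N(f)(x)$ is a convex combination of the values of $f$ at the $2^d$ vertices of the cell containing $x$; subtracting $f(x)$ and using that $f$ is Lipschitz (this is exactly the situation of Proposition \ref{converDisH} applied to $f=v_h(t,\cdot)$) yields $|I^{1,Full}_N(f)(x)-f(x)|\le C\sqrt d\,2^{-N}$, where $C$ is the Lipschitz constant of $f$. Writing $K$ for a constant absorbing $C\sqrt d$ (and, if the modified boundary functions are used, their uniform bound), we obtain $||I^{1,Full}_N(f)-f||_\infty\le K 2^{-N}$, and the triangle inequality
\begin{eqnarray*}
||I^{1,S}_N(f)-f||_\infty\le ||I^{1,S}_N(f)-I^{1,Full}_N(f)||_\infty+||I^{1,Full}_N(f)-f||_\infty\le K 2^{-N}+M\epsilon
\end{eqnarray*}
concludes.

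The delicate point is the first step: identifying the adapted interpolant with a truncation of the (intrinsic) hierarchical expansion of $f$ requires that the retained set of nodes contain the fathers of each of its elements, that is, that no ``hole'' be left in the multidimensional tree — precisely what the missing-father completion performed during refinement guarantees; without it the surplus of a retained node computed on $P_S^N(t)$ need not coincide with the one computed on $P_F^N$ and the telescoping breaks down. Everything else is elementary: the difference of the two interpolators reduces to the sum of the $M$ missing hierarchical increments, and the full grid error of multilinear interpolation of a Lipschitz function is the classical first-order estimate in $2^{-N}$; the only trap is to resist invoking \reff{ErrorInterpLin} in place of this crude bound.
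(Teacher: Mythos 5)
Your proof is correct and follows essentially the same route as the paper: the same triangle-inequality split through the full-grid interpolant $I^{1,Full}_N$, the bound $M\epsilon$ on the sparse/full discrepancy by summing the $M$ missing surpluses against basis functions bounded by one, and the classical first-order estimate $K2^{-N}$ for multilinear interpolation of a Lipschitz function. The only difference is that you make explicit the downward-closedness (missing-father completion) needed for the telescoping of surpluses, a point the paper leaves implicit.
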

\begin{proof}
 As in \cite{ma} :
\begin{eqnarray*}
||I^{1,S}_N(f)-f||_\infty  & \le  & ||I^{1,S}_N(f) -I^{1,Full}_{N}(f)||_\infty + || I^{1,Full}_{N}(f) -f||_\infty
\end{eqnarray*}
$f$ function being Lipschitz, we get  the classical result (see \cite{warin1}) $|| I^{1,Full}_{N}(f) -f|| \le  K 2 ^{-N}$.
On the other hand by assumption \ref{assump1}
\begin{eqnarray*}
||I^{1,S}_N(f) -I^{1,Full}_{N}(f)||_\infty & \le & \sum_{|\underline l|_{\infty} \le N,  x_{\underline l, \underline i} \notin P_S^N  }    |\alpha^{L}(f)_{\underline l, \underline i}| ||\phi^{(L)}_{\underline l, \underline i}||_\infty \\
& \le & M \epsilon
\end{eqnarray*}
because the function basis is bounded by one.
\end{proof}
\begin{remark}
The adaptation algorithm is stopped when all the surplus calculated are below a given level $\epsilon$. It doesn't prove that while going on
the refinement, we only get surplus below this threshold.
\end{remark}
The next lemma proves that if missing surplus to get the full grid approximation at each time step are bounded by a given value and if the number of missing points is bounded uniformly in time, then
the Sparse Grid Semi-Lagrangian approximation converges towards the viscosity solution of the problem for the linear interpolator.
\begin{assumption}
\label{assump3} 
Assumption \ref{assump1} is satisfied with a number $M$ independent on the  time step number . 
\end{assumption}
\begin{theorem}
Under assumption \ref{assump3}, using the linear interpolator, the scheme \reff{hjbInterpGeneral} satisfies for all $j \in [0,T/n]$
\begin{eqnarray}
|| \tilde v(t_j,.)- v(t,.)||_\infty & \le  & C (h^{\frac{1}{4}} + 2^{-N} + M \frac{\epsilon}{h})
\end{eqnarray}
\label{convRateL}
\end{theorem}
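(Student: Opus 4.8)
The plan is to combine three error sources: the time-discretization error from Proposition \ref{converDisH}, the spatial interpolation error on the full grid, and the ``missing surplus'' error coming from the adaptive sparse grid, and to track how the latter two propagate through the $n = T/h$ time steps of the scheme \reff{hjbInterpGeneral}. First I would compare $\tilde v$ to $v_h$, the discrete solution of \reff{hjbCamilli}--\reff{hjbCamilli1}, since Proposition \ref{converDisH} already gives $|v - v_h|_0 \le C h^{1/4}$; it then suffices to bound $\|\tilde v(t_j,\cdot) - v_h(t_j,\cdot)\|_\infty$ by $C(2^{-N} + M\epsilon/h)$. Set $e_j := \|\tilde v(t_j,\cdot) - v_h(t_j,\cdot)\|_\infty$, and note $e_0 = 0$ since both equal $g$.

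The core is a one-step stability estimate. At step $j$, the exact discrete scheme applies the operator $v \mapsto v + \inf_a L_{a,h}(v)$ to $v_h(t_{j-1},\cdot)$, whereas \reff{hjbInterpGeneral} applies $\inf_a L_{a,h}$ to the interpolant $I^{1,S}_N[(v_{\underline l,\underline i}(t_{j-1}))]$ and only at sparse-grid nodes. I would first invoke the Lemma just proved: for the Lipschitz function $v_h(t_{j-1},\cdot)$ (Lipschitz uniformly in time by \reff{LipschitVh}), $\|I^{1,S}_N(v_h(t_{j-1},\cdot)) - v_h(t_{j-1},\cdot)\|_\infty \le K 2^{-N} + M\epsilon$. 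Combined with $\|I^{1,S}_N(\tilde v(t_{j-1},\cdot)) - I^{1,S}_N(v_h(t_{j-1},\cdot))\|_\infty \le e_{j-1}$ (the linear sparse interpolator is a bounded, in fact $L^\infty$-nonexpansive convex combination of nodal values), the argument of $L_{a,h}$ at step $j$ differs from $v_h(t_{j-1},\cdot)$ by at most $e_{j-1} + K 2^{-N} + M\epsilon$ in sup norm. Now the key monotonicity/contraction property of the Camilli--Falcone operator: $w \mapsto w + L_{a,h}(w)$ has the form ``convex combination of translated values $+\, h c_a w + h f_a$'', so under the CFL-type restriction on $h$ in Proposition \ref{converDisH} it is monotone and its Lipschitz constant in sup norm is $1 + h|c_a|_0 \le 1 + Ch$. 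Taking $\inf_a$ preserves this, so
\begin{eqnarray*}
e_j & \le & (1 + Ch)\bigl(e_{j-1} + K 2^{-N} + M\epsilon\bigr).
\end{eqnarray*}

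Iterating this recursion from $e_0 = 0$ over $j \le n = T/h$ steps gives $e_j \le \frac{(1+Ch)^n - 1}{h}\,(K 2^{-N} + M\epsilon) \le C'(2^{-N} + M\epsilon/h)$, using $(1+Ch)^{T/h} \le e^{CT}$ and dividing the geometric sum by $h$. Adding the $C h^{1/4}$ from Proposition \ref{converDisH} via the triangle inequality yields the claimed bound $\|\tilde v(t_j,\cdot) - v(t,\cdot)\|_\infty \le C(h^{1/4} + 2^{-N} + M\epsilon/h)$, where Assumption \ref{assump3} is exactly what makes $M$ (hence the constant) independent of the step index so that the iteration does not accumulate an extra factor of $n$. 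The main obstacle is the one-step estimate: one must verify carefully that (i) the adaptive linear sparse interpolator is nonexpansive in sup norm — this is where the truncation being trivial for $p=1$ matters, and where one uses that the nodal hat basis forms a partition-of-unity-like bounded system — and (ii) the CFL restriction genuinely bounds the amplification factor by $1 + Ch$ rather than something like $1 + C\sqrt{h}$, which would destroy the summation. Both are essentially the Barles--Souganidis-style arguments of \cite{warin1}, adapted to the sparse setting, but the bookkeeping that the missing-surplus error $M\epsilon$ enters additively at each step (and therefore divided by $h$ after summation) is the delicate point and explains the $\epsilon/h$ in the statement.
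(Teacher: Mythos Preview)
Your argument hinges on the claim that the linear sparse-grid interpolator $I^{1,S}_N$ is $L^\infty$-nonexpansive because it is ``a convex combination of nodal values''. This is false: unlike the full-grid piecewise-linear interpolant, the sparse-grid interpolant is \emph{not} a convex combination of nodal values. Via the combination technique one has $I^{1,S}_N=\sum_{\underline l}c_{\underline l}\,I_{\underline l}^{\mathrm{Full}}$ with some $c_{\underline l}<0$, and the Lebesgue constant is of order $(\log N)^{d-1}$, strictly greater than $1$ already for $d=2$. Consequently your one-step bound becomes $e_j\le(1+Ch)\,\Lambda_N\,e_{j-1}+C(2^{-N}+M\epsilon)$ with $\Lambda_N>1$, and after $T/h$ iterations you pick up a factor $\Lambda_N^{T/h}$, which diverges as $h\to0$. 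A secondary issue is that you invoke the Lemma with $f=v_h(t_{j-1},\cdot)$, but Assumption~\ref{assump3} controls the missing hierarchical surpluses of the \emph{computed} values $(v_{\underline l,\underline i})$ (that is what the adaptive refinement tests), not those of $v_h$; the $M\epsilon$ bound is therefore not available for $v_h$ directly.

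The paper circumvents both problems by a different splitting: at each step it writes
\[
|\tilde v(t,x)-v_h(t,x)|\le \bigl|I^{1,S}_N[(v_{\underline l,\underline i}(t))]-I^{1,\mathrm{Full}}_N[(v_{\underline l,\underline i}(t))]\bigr|(x)+\bigl|I^{1,\mathrm{Full}}_N[(v_{\underline l,\underline i}(t))](x)-v_h(t,x)\bigr|.
\]
The first term is $\le M\epsilon$ by Assumption~\ref{assump3} applied to the numerical values themselves. The second term involves the \emph{full-grid} interpolant, which \emph{is} a convex combination of the $2^d$ surrounding nodal values, so it is bounded by $\sup_{(\underline l,\underline i)\in K^{N,\mathrm{FULL}}(x)}|v_{\underline l,\underline i}(t)-v_h(t,x)|$; each such vertex satisfies $|x_{\underline l,\underline i}-x|\le C2^{-N}$, and one then compares the scheme at $x_{\underline l,\underline i}$ with the exact recursion at $x$ using the uniform Lipschitz bound on $v_h$ and on the coefficients. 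This yields $e(t)\le(1+h\hat K)\,e(t-h)+C(2^{-N}+M\epsilon)$ with amplification factor $1+h\hat K$ and no Lebesgue constant, after which discrete Gronwall gives $e(t_j)\le C(2^{-N}/h+M\epsilon/h)$. The passage through the full grid is precisely the missing idea in your outline.
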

\begin{proof}
 We choose $h \le   1$ and satisfying the hypothesis of proposition \reff{converDisH}.
Recalling that $v_h$ is the solution of equations \reff{hjbCamilli} and  \reff{hjbCamilli1},
 we directly estimate  $\tilde v - v_h$ . Introducing
\begin{eqnarray*}
e(t) & =& ||\tilde v(t,.) - v_h(t,.)||_{\infty} \nonumber
\end{eqnarray*}
we estimate
\begin{eqnarray}
|\tilde v(t,x)- v_h(t,x)|  & \le &  |I^1[ (v_{\underline l, \underline i}(t))_{(\underline l, \underline i) \in  P_S^N(t)}](x) -I^1[ (v_{\underline l, \underline i}(t))_{(\underline l, \underline i) \in  P_F^N}](x)| + \nonumber\\
& & 
    |I^1[ (v_{\underline l, \underline i}(t))_{(\underline l, \underline i) \in  P_F^N}- v_h(x,t)|  \nonumber \\
& \le & M \epsilon + \sup_{(\underline l ,\underline i) \in  K^{N,FULL}(t,x)}  |v_{\underline l, \underline i}(t) - v_h(t,x)|
\label{eqnJac}
\end{eqnarray}
where $K^{N,FULL}(t,x)$ defines the set  of all the points $(\underline l ,\underline i)$ of $P_F^N$ corresponding to   edges of the nodal cell where $x$ lies in the full grid. Notice that
\begin{eqnarray}
 \forall (\underline l, \underline i) \in K^{n,FULL}(x), | x_{\underline l, \underline i} - x|_{\infty} < C 2^{-N}.
\label{distPoint}
\end{eqnarray}

For $(\underline l, \underline i) \in K^{N,FULL}(t,x)$, we introduce  $V :=  v_{\underline l, \underline i}(t) - v_h(t,x) $. \\
Using equation \reff{hjbInterpGeneral}, the classical relation $| inf . - inf . | \le sup | . - .|$, the fact that the data in equation \reff{hjb} belong to  $C_1(Y)$ and equation \reff{distPoint} we get 
\begin{eqnarray}
|V|  & \le &     \frac{1}{2q}  \sum_{i=1}^q   \left [\sup_a | I^1[ (v_{\underline{ll}, \underline{ii}}(t-h))_{(\underline{ll}, \underline{ii}) \in P_S^N(t-h)}](\phi^{+}_{a,h,i}(t-h,x_{\underline l, \underline i}))  \right. \nonumber\\
  & & - v_{h}(t-h,\phi^{+}_{a,h,i}(t-h,x))|  \nonumber \\
  &     &   + \sup_a | I^1[ (v_{\underline{ll}, \underline{ii}}(t-h))_{(\underline{ll}, \underline{ii}) \in P_S^N(t-h)}]( \phi^{-}_{a,h,i}(t-h,x_{\underline l, \underline i})) \nonumber \\
& &  \left . -    v_{h}(t-h,\phi^{-}_{a,h,i}(t-h,x)) |  \right ]  \nonumber \\
  &   & +   h \sup_a |c_a|_0 | I^1[ (v_{\underline{ll}, \underline{ii}}(t-h))_{(\underline{ll}, \underline{ii}) \in P_S^N(t-h)}](x_{\underline l, \underline i}) - v_h(t-h,x_{\underline l, \underline i})|   \nonumber \\
  & &  + h |v_h|_0 |c_a|_1 C 2^{-N} +    h \sup_a| f_a|_1  C 2^{-N} 
\label{eqDem1}
\end{eqnarray}
Similarly, we obtain : 
\begin{eqnarray*}
| \phi^{-}_{a,h,i}(t,x_{\underline l, \underline i}) -\phi^{-}_{a,h,i}(t,x) | & \le & C2^{-N} (1 +  \sup_a |b_a|_1 h  + \sup_a |(\sigma_a)_i|_1 \sqrt{h q}) \\
                                                        & \le &  C 2^{-N} (1+C (\sqrt{h q} + h))   \\
                                                        & \le &  C 2^{-N} 
\end{eqnarray*}
and similar result is obtained  for $\phi^{+}$.\\
Using the fact that $|v_h|_1$ is bounded independently on  $h$, the previous estimation for $\phi^{+}$,  one gets the estimate
\begin{eqnarray}
|  & I^1[ & (v_{\underline{ll}, \underline{ii}}(t-h))_{(\underline{ll}, \underline{ii}) \in P_S^N(t-h)}] \left(\phi^{+}_{a,h,i}(t-h,x_{\underline l, \underline i})\right)  - v_{h}(t-h,\phi^{+}_{a,h,i}(t-h,x))| \le  \nonumber \\
& &   || \tilde v(t-h,.) -  v_h(t-h,.)||_\infty +
C |v_h|_1 2^{-N} 
\label{eqDem2}
\end{eqnarray}
Using the fact that $|f_a|_1$, $|c_a|_1$  are bounded independently of  $a$  as in \cite{warin1}, equations \reff{eqDem1} and \reff{eqDem2} give: 
\begin{eqnarray}
|V| & \le &  || \tilde v(t-h,.) - v_h(t-h,.)||_\infty (1+ h \hat K ) + C 2^{-N} 
\label{eqDem3}
\end{eqnarray}
where the constant  $C$ depends on  $\tilde K$, $|v_h|_1$, $\hat K$.\\
So using equations \reff{eqnJac} and \reff{eqDem3} 
\begin{eqnarray*}
e(t) & \le &  (1+ h \hat K)   e(t-h) +  C ( 2^{-N}  + M \epsilon)
\end{eqnarray*}
Using the fact that $e(0)=0$ and using discrete Gronwall Lemma
\begin{eqnarray*}
e(t_j) & \le &  C  e^{ \hat C T}  (\frac{ 2^{-N}}{ h} + \frac{M \epsilon}{h}),   \quad \quad  \forall j  < \frac{T}{n}
\end{eqnarray*}
Moreover by using  $||\tilde v(t,.) - v(t,.)||_\infty  \le | \tilde v(t,.) - v_{h}(t,.)|_\infty + |v_{h} -v|_0$ and the proposition \reff{converDisH} we get $ \forall j \in (0,T/n)$ :
\begin{eqnarray}
|| \tilde v(t_j,.)- v(t_j,.)||_\infty & \le  & C (h^{\frac{1}{4}} + \frac{ 2^{-N}}{h} + M \frac{\epsilon}{h})
\end{eqnarray}

\end{proof}

\subsection{High order interpolator estimator}
\label{HighEstiSection}
The case of high order is less obvious in the general case. We will impose that the discretized  grid is refined enough for 2 schemes with solutions $\tilde v^{-}$ and $\tilde v^+$  converging towards  the viscosity solution and such that $\tilde v^{-} \le v \le \tilde v^{+}$.\\
We introduce the interpolation operators on the sparse grid (see definition in equation \reff{truncation}):
\begin{eqnarray*}
I_{-,N}^{S}(f)(x) & = &  \underline{f}(x),
\end{eqnarray*}
\begin{eqnarray*}
I_{+,N}^S(f)(x) & = &  \bar{f}(x),
\end{eqnarray*}
and the equivalent on  the full grid $I_{-}^{FULL}$, $I_{+}^{FULL}$.
Let's define $\tilde v_{-}(t_j,.)$, $\tilde v_{+}(t_j,.)$ the value function defined by
\begin{eqnarray*}
\tilde v^{\pm}(t+h,x) & =& I_{\pm,N}^S[ (v^{\pm}_{\underline l, \underline i}(t+h))_{(\underline l, \underline i) \in  P_S^N(t+h)} ]
\end{eqnarray*}
with 
\begin{eqnarray}
v^{\pm}_{\underline l ,\underline i}(t+h,x)  & =&  v^{\pm}_{\underline l ,\underline i}(t)  + \inf_{a \in \mathop{A}}  \left [  ( L_{a,h} I^{\pm}[(v_{\underline{ll} ,\underline{ii}}(t))_{(\underline{ll} ,\underline{ii}) \in P_S^N(t)}](x_{\underline l ,\underline i})  \right]  \label{hjbInterpPlusMinus} 
\end{eqnarray}
We use the following assumption stating that in the previous schemes, the adaptation (used in our main scheme) is accurate enough.
\begin{assumption}
\label{assump4} 
We suppose that  all surplus missing  of the hierarchical decomposition of the  solution $v^{\pm}(t)$ on the sparse grid $P_S^N(t)$
associated to points missing  to obtain the full grid operator are below $\epsilon$  and that  $M$ the number of missing points is independent
on the time step.
\end{assumption}
\begin{assumption}
\label{assump5} 
The coefficient $c_a$ is positive. 
\end{assumption}
\begin{theorem}
Under assumptions \ref{assump4} and \ref{assump5}, using the interpolation operators $I^{p,c,S}_N$, $p=2, 3$, the scheme \reff{hjbInterpGeneral} satisfies for all $j \in [0,T/n]$
\begin{eqnarray}
|| \tilde v(t_j,.)- v(t,.)||_\infty & \le  & C (h^{\frac{1}{4}} + \frac{2^{-N}}{h} + M \frac{\epsilon}{h})
\end{eqnarray}
\label{convRateQC}
\end{theorem}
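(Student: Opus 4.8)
The plan is to squeeze the truncated high-order scheme between the two monotone auxiliary schemes $\tilde v^{-}$ and $\tilde v^{+}$ of \reff{hjbInterpPlusMinus}, and then to run on each of these the estimate already established for the linear scheme in Theorem \ref{convRateL}. The starting observation is purely algebraic: by the definition \reff{truncation} of the truncation, for any vector of nodal values $(w_{\underline l,\underline i})$ and any $x$,
$$I^S_{-,N}[(w_{\underline l,\underline i})](x)\ \le\ I^{p,c,S}_N[(w_{\underline l,\underline i})](x)\ \le\ I^S_{+,N}[(w_{\underline l,\underline i})](x),$$
while $I^S_{-,N}$ and $I^S_{+,N}$, being pointwise minima and maxima of nodal values over the cell $K^n(x)$, are monotone non-decreasing in those values; all three operators are, moreover, exact at the grid nodes, so truncation does not change a value at a node.

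Next I would show that $\tilde v^{-}(t_j,\cdot)\le \tilde v(t_j,\cdot)\le \tilde v^{+}(t_j,\cdot)$ for every $j$. After the elementary cancellation of the $w(x_{\underline l,\underline i})$ terms, the scheme \reff{hjbInterpGeneral} at a node reads
$$v_{\underline l,\underline i}(t+h)=\inf_{a\in A}\left[\sum_{i=1}^q\frac{1}{2q}\left(W(\phi^{+}_{a,h,i})+W(\phi^{-}_{a,h,i})\right)+h\,c_a(t,x_{\underline l,\underline i})\,v_{\underline l,\underline i}(t)+h\,f_a(t,x_{\underline l,\underline i})\right],$$
with $W=I^{p,c,S}_N[(v_{\underline{ll},\underline{ii}}(t))]$ and $\phi^{\pm}_{a,h,i}=\phi^{\pm}_{a,h,i}(t,x_{\underline l,\underline i})$; all the coefficients $\frac{1}{2q}$ and $h\,c_a$ being non-negative (the latter by assumption \ref{assump5} and $h>0$) and $W$ non-decreasing in the nodal vector, this one-step map is monotone non-decreasing, and the same holds for \reff{hjbInterpPlusMinus}. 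Since the three schemes carry the same initial data $v_{\underline l,\underline i}(0)=g(x_{\underline l,\underline i})$ and are run on the same adapted grids $P^N_S(t_j)$, an induction on $j$ — applying at each step the monotonicity together with the sandwich of the first paragraph — gives $v^{-}_{\underline l,\underline i}(t_j)\le v_{\underline l,\underline i}(t_j)\le v^{+}_{\underline l,\underline i}(t_j)$ at every node; applying $I^S_{-,N},\,I^{p,c,S}_N,\,I^S_{+,N}$ respectively then yields $\tilde v^{-}(t_j,\cdot)\le \tilde v(t_j,\cdot)\le \tilde v^{+}(t_j,\cdot)$ everywhere.

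Then I would treat $\tilde v^{\pm}$ exactly as the linear scheme in Theorem \ref{convRateL}, since the ingredients of that proof all have analogues. The operators $I^S_{\pm,N}$ are exact at the nodes; for a Lipschitz $f$ one has $\|I^{FULL}_{\pm,N}(f)-f\|_\infty\le K2^{-N}$ because $\underline f(x)$ and $\bar f(x)$ only involve nodal values with $|x_{\underline l,\underline i}-x|_\infty\le C2^{-N}$ (cf. \reff{distPoint}); under assumption \ref{assump4} one has $\|I^S_{\pm,N}(f)-I^{FULL}_{\pm,N}(f)\|_\infty\le M\epsilon$, in the role that assumption \ref{assump1} played in the lemma preceding Theorem \ref{convRateL}; and for any constant $c$, $|I^S_{\pm,N}(w)(x)-c|\le\max_{(\underline l,\underline i)\in K^n(x)}|w_{\underline l,\underline i}-c|$, which replaces the convexity of the linear interpolation used in the split \reff{eqnJac}. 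With these facts, the computation of the proof of Theorem \ref{convRateL} — controlling $|\phi^{\pm}_{a,h,i}(t,x_{\underline l,\underline i})-\phi^{\pm}_{a,h,i}(t,x)|\le C2^{-N}$, invoking the $h$-uniform Lipschitz bound of Proposition \reff{converDisH} on $v_h$, and applying the discrete Gronwall Lemma — gives $\|\tilde v^{\pm}(t_j,\cdot)-v_h(t_j,\cdot)\|_\infty\le C(2^{-N}/h+M\epsilon/h)$, hence $\|\tilde v^{\pm}(t_j,\cdot)-v(t_j,\cdot)\|_\infty\le C(h^{1/4}+2^{-N}/h+M\epsilon/h)$. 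The sandwich of the second paragraph then gives $|\tilde v(t_j,x)-v(t_j,x)|\le\max\left(|\tilde v^{-}(t_j,x)-v(t_j,x)|,\,|\tilde v^{+}(t_j,x)-v(t_j,x)|\right)$ pointwise, which is the announced estimate.

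The step I expect to be the main obstacle is the sparse-versus-full bound $\|I^S_{\pm,N}(f)-I^{FULL}_{\pm,N}(f)\|_\infty\le M\epsilon$ under assumption \ref{assump4}: unlike the linear operator, the min/max operators possess no clean additive hierarchical surplus decomposition, so one has to argue that each missing node whose hierarchical surplus of $v^{\pm}(t)$ is below $\epsilon$ perturbs the relevant cell minimum or maximum by at most a controlled multiple of $\epsilon$, and then sum over the $M$ missing points. A secondary point to watch is the well-posedness of running the three schemes on identical adapted grids, which is what makes the nodal comparison of the second paragraph meaningful; note that the positivity $c_a\ge 0$ is precisely what powers that comparison, consistently with the fact that it was not needed in Theorem \ref{convRateL}, whose proof rests on a direct estimate rather than on monotonicity.
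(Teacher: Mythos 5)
Your proposal follows the paper's own argument exactly: sandwich the truncated high-order scheme between the min/max schemes $\tilde v^{\pm}$ of \reff{hjbInterpPlusMinus} by a recurrence powered by the truncation bounds and the positivity of $c_a$ from Assumption \ref{assump5}, run the machinery of Theorem \ref{convRateL} on each of $\tilde v^{\pm}$ using the $O(2^{-N})$ piecewise-constant interpolation error for Lipschitz functions, and combine. You in fact supply more detail than the paper does --- in particular the sparse-versus-full bound for the min/max operators, which you rightly flag as the delicate step, is passed over in the paper's proof with the single remark that ``all interpolation errors are the same for constant per mesh interpolation as in the linear case.''
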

\begin{proof}
By recurrence, due to the truncation applied for quadratic or cubic operator and assumption \ref{assump5}   we easily get that
\begin{eqnarray}
\tilde v^{-}(t,x) & \le \tilde v \le \tilde v^{+}
\label{inBetween}
\end{eqnarray}
For $\tilde v^{-}(t,x)$ and  $\tilde v^{-}(t,x)$ we can use exactly the same procedure as in theorem \ref{convRateL}.
Because the solution is only Lipschitz, all interpolation errors are the same for constant per mesh interpolation as in the linear case.
Therefore we get :
\begin{eqnarray}
|| \tilde v^{-}(t_j,.)- v(t_j,.)||_\infty & \le  & C (h^{\frac{1}{4}} + \frac{2^{-N}}{ h} + M \frac{\epsilon}{h})
\label{infEst}
\end{eqnarray}
and
\begin{eqnarray}
|| \tilde v^{+}(t_j,.)- v(t_j,.)||_\infty & \le  & C (h^{\frac{1}{4}} + \frac{2^{-N}}{ h} + M \frac{\epsilon}{h})
\label{supEst}
\end{eqnarray}
The result is straightforward using equations \reff{inBetween}, \reff{infEst} and \reff{supEst}.
\end{proof}
\begin{remark}
Assumption \ref{assump5} is not a strict restriction : using a change of unknown, $v(t,x) = e^{-K t} u(t,x)$ with $K > -|c_a|_1$, $u$ satisfies assumption \ref{assump5}.
\end{remark}
\section{Numerical examples}
\label{NumSection}
Two data structures have been tested to store hierarchical coefficients : the most compact is the one developed in \cite{dirnstorfer} using a bitset to represent a position in the multidimensional tree. A second one tested is more classical : each node is represented by a multidimensional level and a multidimensional index. The efficiency of both data structure for interpolation is very similar so  we decide to give results with the second data structure. \\
A key point for the efficiency of the method is the reduction of the cost of the interpolation procedure. Due to the adaptation step, the structure is very irregular.
A classical algorithm to interpolate is to see the sparse grid as a recombination of full grids with different size mesh. The solution is interpolated on each full grid and all contributions are gathered to reconstruct the solution. This approach  in the multidimensional case is not effective due to adaptation.
An effective one consists in starting from the root node in the tree, then calculate its contribution to the solution and recursively
go down the tree in all the dimensions testing if the point belongs to the support of the left or the right node. In that way only basis functions whose support encompass the points are evaluated giving a contribution to the solution.\\
The different test cases can be divided into two kinds :
\begin{itemize}
\item in the first two cases, the problem needs to get a quite good approximation of the solution at the boundary. Therefore  we use the Sparse Grid method with the boundary points. No adaptation is used for these very simple cases.
\item in all the other cases, the boundary condition is not very important . Because we are interested in what is happening  far away from the boundary, one can only use an approximated solution on the boundary
or one can use the Sparse Grid using extrapolation to avoid boundary points.
The last cases are financial cases that permits to test the extrapolation method.
\end{itemize}
In all the cases given, the truncation appears to be useless, costing more than $5\%$  of the total consumption time but only modifying the solution calculated at the third digit. So the results are given without truncation.
\begin{remark}
Please notice that when the modified boundary is used no estimation of the error is available.
\end{remark}
\begin{remark}
When test cases are parallelized, the parallelization is only achieved by distributing the points where the value function must be estimated.
No parallelization of the hierarchization procedure and the adaptation phase (location of points to be refined, points to add to the structure, points to be removed when coarsening) is performed.
When a high number of processors is used, the time used in the non parallelized part of the software can be dominating. 
\end{remark}
\subsection{Two dimensional test cases}
\label{2DNumSection}
We take some test cases from \cite{warin1}
\subsubsection{First test case without control}
Its solution is not regular 
\begin{eqnarray}
u(t,x) &= & (1+t) \sin(\frac{x_2}{2}) \left \{ \begin{array}{ll}
                                              \sin \frac{x_1}{2}  &  \mbox{  for }  -2 \pi < x_1 < 0 \\
                                              \sin  \frac{x_1}{4}  &  \mbox{  for }  0 < x_1 < 2 \pi \\
                                              \end{array} \right.  \nonumber 
\end{eqnarray}
with 
\begin{eqnarray}
f_a(t,x) & =& \sin \frac{x_2}{2}  \left \{ \begin{array}{ll}
                     \sin \frac{ x_1}{2}  (1+ \frac{1+t}{4}) (\sin^2 x_1 + \sin^2 x_2)  &  \mbox{ for } -2 \pi < x_1 < 0 \\ 
                     \sin  \frac{ x_1}{4} (1+ \frac{1+t}{16}) (\sin^2 x_1 + 4 \sin^2 x_2)  &  \mbox{ for }  0   < x_1 < 2 \pi
                                          \end{array}
                                 \right .   \nonumber  \\
       &   & - \sin x_1 \sin x_2 \cos  \frac{x_2}{2}  \left \{ \begin{array}{ll}
                             \frac{1+t}{2} \cos \frac{ x_1}{2}  &  \mbox{ for } -2 \pi < x_1 < 0 \\ 
                             \frac{1+t}{4} \cos \frac{ x_1}{4}   &  \mbox{ for }  0   < x_1 < 2 \pi \\
                                           \end{array}
                                                   \right .  \nonumber  \\
c_a(t,x)  &= & 0,  \quad  b_a(t,x)= 0 \quad \sigma_a(t,x) =  \sqrt{2}  \left( \begin{array}{l}
                                                                             \sin x_1  \\
                                                                             \sin x_2  
                                                                 \end{array} 
                                                                 \right )  \nonumber 
\end{eqnarray}
On take  $Q = (0,1] \times [- 2 \pi, 2 \pi]^2$, the number of time step is equal to 400.
The error $er_n$  at a step $n$ is given in the infinite norm on the domain. The rate of convergence given is calculated as $\log(\frac{er_n}{er_{n-1}})$.
An error of $4e-4$ corresponds to the time discretization error of the scheme.
The solution is not regular but quadratic and cubic approximation give far better results than linear interpolation scheme. Besides it is difficult to get a steady rate of convergence.

\begin{table}
\small
\caption{Test case 1  } 
\label{Case1}

 \begin{tabular}{|c|c|c|c|c|c|c|c|c|c|}
\hline
 & \multicolumn{3}{c|}{LINEAR} & \multicolumn{3}{c|}{QUADRATIC} & \multicolumn{3}{c|}{CUBIC}    \\
\hline
 Level   &  Err & Rate  & Time  &  Err & Rate  & Time  & Err & Rate  & Time         \\
\hline
4        &   0.3835  &    $-$      &   0     &  0.0497  &  $-$  &   0     &  0.0325    &  $-$   &   0   \\ 
5        &   0.4429  &   -0.14     &   0     &  0.0096  & 1.64  &   0     &  0.0163    &  0.7   &  0    \\ 
6        &   0.4301  &   0.03      &   0     &  0.0038  & 0.93  &   0     &  0.0046    &  1.25  &  0    \\
7        &   0.3376  &   0.24      &   1     &  0.0012  & 1.1   &   1     &  0.0004    &  2.3   &  1    \\
8        &   0.1794  &   0.63      &   3     &  0.0004  & 1.15  &   3     &          &          &       \\
9        &   0.0685  &   0.96      &   6     &          &       &         &          &          &       \\
10       &   0.0249  &   1.01      &   16    &          &       &         &          &          &        \\
11       &   0.0078  &   1.15      &   41    &          &       &         &          &          &        \\
12       &   0.0022  &   1.25      &   103   &          &       &         &          &          &  \\
13       &   0.0005  &   1.44      &   258   &          &       &          &          &          &  \\
\hline

\end{tabular} 
\end{table}

\subsubsection{Control problem with a regular solution \cite{Jakobsen}, \cite{Zidani}}
The regular solution is given by 
\begin{eqnarray}
 u(t,x_1,x_2 ) = (\frac{3}{2} -t ) \sin x_1 \sin x_2 \nonumber 
\end{eqnarray}
Coefficients are given by 
\begin{eqnarray}
f_a(t,x) & =& (\frac{1}{2}-t) \sin x_1 \sin x_2  +  (\frac{3}{2}-t) \left[ \sqrt{ \cos^2 x_1 \sin^2 x_2 + \sin^2 x_1 \cos^2 x_2} \right. \nonumber \\
         &  & \left . - 2 \sin(x_1+x_2) \cos(x_1+x_2) \cos x_1 \cos x_2 \right]  \nonumber  \\
c_a(t,x)  &= & 0,  \quad  b_a(t,x)=  a \quad \sigma_a(t,x) =  \sqrt{2}  \left( \begin{array}{l}
                                                                             \sin(x_1+x_2)  \\
                                                                             \cos(x_1+x_2)  
                                                                 \end{array} 
                                                                 \right ), \nonumber  \\
   \mathop{A}& = &  \{ a \in \R^2 : a_1^2 + a_2^2 =1 \} \nonumber  
\end{eqnarray}
$Q = (0,1] \times [-\pi, \pi]^2$ and the number of time steps is equal to 800, the number of control equal to 400.
With  linear interpolation, the convergence is slow but exhibit a  steady rate. With quadratic or cubic, the
error is smaller but the convergence rate is more erratic. Oscillation for values around $0.0005$ are linked to the 
time discretization error.  The error and the convergence rates are calculated as in the previous case.

\begin{table}
\caption{Test case 2  } 
\label{Case2}

 \begin{tabular}{|c|c|c|c|c|c|c|c|c|c|}
\hline
 &  \multicolumn{3}{c|}{LINEAR} & \multicolumn{3}{c|}{QUADRATIC} & \multicolumn{3}{c|}{CUBIC}    \\
\hline
 Level   &  Err & Rate  & Time  &   Err & Rate  & Time  &  Err & Rate  & Time         \\
\hline
4        &   0.4760  &    $-$      &   31       &  0.0154  &  $-$  &   32       &  0.0046    &  $-$   &  34  \\ 
5        &   0.3943  &   0.18      &   103      &  0.0072  &  0.74 &   106      &  0.0363    & -2.06   & 115   \\ 
6        &   0.2042  &   0.65      &   319      &  0.0032  & 0.80  &   320      &  0.0007    &  3.89  &  357  \\
7        &   0.0717  &   1.04      &   931      &  0.0104  & -1.17 &   968      &  0.0005    &  0.47   & 1047   \\
8        &   0.0216  &   1.20      &   2608     &  0.0005  & 2.97  &   2700     &    0.0005  &   $-$   &  2939     \\
9        &   0.0058  &   1.31      &   7109     &          &       &            &            &          &          \\
10       &   0.0012  &   1.52      &  18575     &          &       &            &            &          &        \\
11       &   0.0003  &   1.58      &  47032     &          &       &            &            &          &        \\
\hline
\end{tabular} 
\end{table}

\section{Portfolio optimization}
\label{PortFolioSection}
All test cases extend or modify  some test cases taken from \cite{warin}.
We  report an application to the continuous-time portfolio optimization problem in financial mathematics. Let $\{S_t,t\in[0,T]\}$ be an It\^o  process modeling the price evolution of $n$ financial securities. The investor chooses an adapted process $\{\theta_t,t\in[0,T]\}$ with values in $\R^d$, where $\theta_t^i$ is the amount invested in  the $i-$th security held at time $t$. In addition, the investor has access to a non-risky security (bank account) where the remaining part of his wealth is invested. The non-risky asset $S^0$ is defined by an adapted interest rates process $\{r_t,t\in[0,T]\}$, i.e. $dS^0_t=S^0_tr_tdt$, $t\in[0,1]$. Then, the dynamics of the wealth process is described by:
 $$
 dX^\theta_t
 =
 \theta_t\cdot \frac{dS_t}{S_t} +(X^\theta_t-\theta_t\cdot \1)\frac{d S^0_t}{S^0_t}
 = 
 \theta_t\cdot \frac{dS_t}{S_t}+(X^\theta_t-\theta_t\cdot \1)r_tdt,
 $$
where $\1=(1,\cdots,1)\in\R^d$.
Let $\Ac$ be the collection of all adapted processes $\theta$ with values in $\R^d$, which are integrable with respect to $S$ and such that the process $X^\theta$ is uniformly bounded from below. Given an absolute risk aversion coefficient $\eta>0$, the portfolio optimization problem is defined by:
 \begin{eqnarray}\label{prob-portefeuille}
 v_0
 &:=&
 \sup_{\theta\in\Ac} \E\left[-\exp\left(-\eta X^\theta_T\right)\right].
 \end{eqnarray}
Under fairly general conditions, this linear stochastic control problem can be characterized as the unique viscosity solution of the corresponding HJB equation.  We shall first start by a two-dimensional example where an explicit solution of the problem is available. Then, we will present some results in a three, four and five dimensional situations.\\
At last in dimension 3, we will work on a less regular solution supposing that the investor is short of a at-the-money call at the initial date (strike $K$).
Then the function to optimize is
\begin{eqnarray}\label{call-prob-portefeuille}
 v_0
 &:=&
 \sup_{\theta\in\Ac} \E\left[-\exp\left(-\eta (X^\theta_T - (S_T -K)^{+} \right)\right].
 \end{eqnarray}

\subsubsection{A two dimensional problem}

Let $d=1$, $r_t=0$ for all $t\in[0,1]$, and assume that the security price process is defined by the Heston model \cite{heston}:
 \begin{eqnarray*}
 dS_t &=& \mu S_tdt +  \sqrt{Y_t} S_t dW_t^{(1)}  
 \\
 dY_t &=&  k (m-Y_t)dt + c\sqrt{Y_t} \left(\rho dW_t^{(1)}+\sqrt{1-\rho^2}dW_t^{(2)}\right),
 \end{eqnarray*}
where $W=(W^{(1)},W^{(2)})$ is a Brownian motion in $\R^2$.
In this context, it is easily seen that the portfolio optimization problem \reff{prob-portefeuille} does not depend on the state variable $s$. Given an initial state at the time origin $t$ given by $(X_t,Y_t)=(x,y)$, the value function $v(t,x,y)$ solves the HJB equation:
 \begin{equation}\label{hjb0}
 \begin{array}{rl}
 v(T,x,y) = - e^{-\eta x}
 ~\mbox{and}~
 0 
 =&\!\!
 - v_t -  k (m-y) v_y - \frac{1}{2}c^2 y v_{yy}\\
&\hspace*{1.7cm}
 -\Sup_{\theta\in\R} \Bigl(\frac12\theta^2yv_{xx}+\theta(\mu v_x+\rho cy v_{xy})\Bigr) 
 \\
 =&\!\!
 - v_t -  k (m-y) v_y - \frac{1}{2}c^2 y v_{yy}
 + \Frac{(\mu v_x + \rho cy v_{xy})^2} 
        {2 y v_{xx}}.
 \end{array}
 \nonumber
 \end{equation}
A quasi explicit solution of this problem was provided by Zariphopoulou \cite{zari}:
 \begin{eqnarray*}
 v(t,x,y)=-e^{-\eta x} \left\| \exp\left(-\frac{1}{2}\int_t^T \frac{\mu^2}{\tilde Y_s}ds\right)
                       \right\|_{\L^{1-\rho^2}}
 \end{eqnarray*}
where the process $\tilde Y$ is defined by
 \begin{eqnarray*}
 \tilde Y_t=y
 &\mbox{ and }&
 d\tilde Y_t = ( k (m-\tilde Y_t)- \mu c\rho)dt +  c\sqrt{\tilde Y_t} dW_t.
 \end{eqnarray*}
We take the same parameter as in \cite{warin} ($\eta=1$, $\mu =0.15$, $c  = 0.2$, $ k  = 0.1$, $m  = 0.3$, $Y_0 =  m$, $\rho = 0$). The initial portfolio value is equal to $1$, the maturity $T$ equal to $1$. The reference solution is $0.3534$.
In all calculations, the commands $\theta_t$ can take values in $[-1.5, 1.5]$ and the interval is discretized with 20 steps.
The resolution domain is $[-4,6]$ for the portfolio value and $[0.02,3]$ for the asset value.
The number of time step is equal to 200. 
 In table \ref{Case4BoundNoAdapt}, we give the results obtained without adaptation for the different interpolators using an exact boundary treatment where we suppose that the boundary solution is given by a constant investment in bonds giving a boundary value equal to $-e^{-1}$. With this resolution parameters, the spatially converged solution is 
$0.3535$.
The rate of convergence, not stable, is  not given.
In table \ref{Case4NoBoundNoAdapt}, the same calculation is achieved without boundary points in the approximation.
Using an exact boundary treatment, quadratic and cubic converge faster than linear interpolation. Cubic is not superior to quadratic.
Using extrapolated boundary, where no theoretical convergence is available, linear interpolation gives the same results as in the 
case of exact boundary treatment. Quadratic  and cubic converges faster when no boundary conditions is imposed  certainly meaning that the boundary condition imposed is not optimal.
\begin{table}
\caption{Test case 3 :  portfolio optimization in dimension 2, no adaptation , exact boundary treatment } 
\label{Case4BoundNoAdapt}

 \begin{tabular}{|c|c|c|c|c|c|c|}
\hline
 &  \multicolumn{2}{c|}{LINEAR} & \multicolumn{2}{c|}{QUADRATIC} & \multicolumn{2}{c|}{CUBIC}    \\
\hline
 Level   & Solution &  Time  &  Solution & Time  &  Solution  & Time         \\
\hline
6        &  -0.3678  &    5       &  -0.3622  &   5     &   -0.3629      &   5   \\
7        &  -0.3670  &    14      &  -0.3433  &   15    &   -0.3360      &   16    \\
8        &  -0.3565  &    40      &  -0.3555  &   40    &  -0.3565      &   43       \\
9        &  -0.3550  &    105     &  -0.3533  &   109   &  -0.3531    &   116       \\
10       &  -0.3539  &    274     &  -0.3535  &   283   &   -0.3535   &   304        \\
11       &  -0.3536  &    700     &            &         &          &           \\
12       &  -0.3535  &    1757    &            &         &          &           \\
\hline
\end{tabular} 
\end{table}
\begin{table}
\caption{Test case 3 : portfolio optimization in dimension 2, no adaptation , extrapolated boundary treatment } 
\label{Case4NoBoundNoAdapt}

 \begin{tabular}{|c|c|c|c|c|c|c|}
\hline
  & \multicolumn{2}{c|}{LINEAR} & \multicolumn{2}{c|}{QUADRATIC} & \multicolumn{2}{c|}{CUBIC}    \\
\hline
 Level   & Solution &  Time  &  Solution & Time  &  Solution  & Time         \\
\hline
6        &  -0.3678  &    3       &   -0.3576  &   3    &  -0.0.3575      &   4   \\
7        &  -0.3668  &    8       &   -0.3522  &   9    &  -0.3519        &   9    \\
8        &  -0.3579  &    24      &   -0.3536  &   24   &  -0.3536        &   26        \\
9        &  -0.3551  &    64      &   -0.3535  &  67    &  -0.3535        &  71       \\
10       &  -0.3539  &    173     &            &        &                 &          \\
11       &  -0.3536  &    451     &            &        &                 &           \\
12       &  -0.3535  &    1145    &            &        &                 &           \\
\hline
\end{tabular} 
\end{table}
The same test case can be achieved starting with an initial level of 5 and local adaptation.
Refinement is only allowed at the center of the domain $[-2.5,4.5] \times [0.05,1.54]$ limiting the maximal level of refinement to $12$.
The use of adaptation permits to reduce the number of points used by focusing on the region of interest.
It is especially effective with the quadratic or cubic interpolation.

\begin{table}
\caption{Test case 3 :  portfolio optimization in dimension 2, adaptation , exact boundary treatment, initial level 5 } 
\label{Case4BoundAdapt}
 \begin{tabular}{|c|c|c|c|c|c|c|}
\hline
 & \multicolumn{2}{c|}{LINEAR} & \multicolumn{2}{c|}{QUADRATIC} & \multicolumn{2}{c|}{CUBIC}    \\
\hline
 Precision   & Solution &  Time  &  Solution & Time  &  Solution  & Time         \\
\hline
0.001       &  -0.3593  &    17      &  -0.3495   &   13     &  -0.3535    &   10   \\
0.00025     &  -0.3553  &    39      &   -0.3545  &   25     &   -0.3535    &   18    \\
6.25e-05    &  -0.3542  &    84      &   -0.3536  &   50     &              &        \\
1.56e-05    &  -0.3537  &    157     &   -0.3535  &   90     &             &        \\
\hline
\end{tabular} 
\end{table}

\begin{table}
\caption{Test case 3 :  portfolio optimization in dimension 2, adaptation , extrapolated  boundary treatment, initial level 5 } 
\label{Case4NoBoundAdapt}
 \begin{tabular}{|c|c|c|c|c|c|c|}
\hline
 & \multicolumn{2}{c|}{LINEAR} & \multicolumn{2}{c|}{QUADRATIC}  & \multicolumn{2}{c|}{CUBIC}    \\
\hline
 Precision   & Solution &  Time  &   Solution & Time   &  Solution & Time         \\
\hline
0.001       &  -0.3581  &    10      &  -0.3537     &   4   &  -0.3540      &   3   \\
0.00025     &  -0.3556  &    25      &  -0.3536     &   7   &  -0.3536      &   6 \\                         
6.25e-05    &  -0.3542  &    54      &  -0.3535     &  16   &  -0.3535       &  13 \\                          
1.56e-05    &   -0.3538 &    101     &              &       &               &           \\       \hline
\end{tabular} 
\end{table}
\begin{figure}[h]
\centering
\includegraphics[width=5cm]{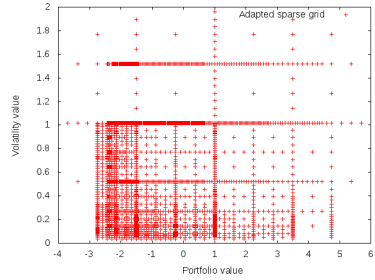}
\caption{Example of adapted meshes in dimension 2  (test case 3 with extrapolated boundary)}
\label{AdaptedMeshNoBound}
\end{figure}
\subsubsection{A three dimensional problem}
We now let $n=1$ , and we assume that the interest rate process is defined by the Ornstein-Uhlenbeck process:
 \begin{eqnarray*}
 dr_t &=& \kappa(b-r_t)dt+\zeta dW^{(0)}_t.
\end{eqnarray*}
The security has the same dynamic as in the previous case. We assume that all correlations are equal to 0.
The optimization problem is still given by equation \reff{prob-portefeuille}.
In this case the value function $v(t,x,r,y)$ satisfies the HJB equation :
\begin{align}
 0&=-v_t - (\mathbf{L}^r +\mathbf{L}^Y)v - r x v_x 
-\sup_{\theta} 
      \left\{\theta (\mu\!-\!r )v_x
             \!+\! \frac{\theta^2}{2}y v_{xx}
      \right\} \nonumber
 \end{align}
where
 \begin{eqnarray*}
 \mathbf{L}^r v=\kappa(b-r)v_r+\frac12\zeta^2 v_{rr},
 &&
 \mathbf{L}^Y v= k\left(m -y\right)v_{y} +  \frac12 c^2  yv_{y y},
 \end{eqnarray*}
We take the same parameter as in the two dimensional case. As for the interest rate model we take $b=0.07$, $r_0= b$, $\zeta= 0.3$. The initial asset values and wealth values
are equal to $1$. The discretization domain  is $[-4,10]\times [-0.2,0.5] \times [0.02,2] $.
The number of time steps is still equal to $200$, the number of commands tested equal to $50$.
We use extrapolated boundary condition and  results are given in table \ref{Case5NoBoundNoAdapt}. Calculation are achieved
with parallelization on  bi processor with 24 cores.
During adaptation the maximal refinement level is fixed to 11. 
Adaptation appears to be very effective in this case trimming the cost of calculation in all the cases.
\begin{table}
\caption{Test case 4 : portfolio optimization in dimension 3, no adaptation , extrapolated boundary treatment } 
\label{Case5NoBoundNoAdapt}
 \begin{tabular}{|c|c|c|c|c|c|c|}
\hline
 & \multicolumn{2}{c|}{LINEAR} & \multicolumn{2}{c|}{QUADRATIC}  & \multicolumn{2}{c|}{CUBIC}    \\
\hline
 Level   & Solution &  Time  &  Solution & Time   &  Solution  & Time       \\
\hline
5        &  -0.3458  &    0        &  -0.3387     &    1       &     -0.3403   &  1              \\
6        &  -0.3441  &    4        &  -0.3394     &    3       &      -0.3397  &   3              \\
7        &  -0.3437  &    13       &  -0.3379     &    13      &      -0.3378   &    14             \\
8        &  -0.3444  &    46       &  -0.3384     &    48      &      -0.3384   &  51                \\
9        &  -0.3396  &    158      &  -0.3383     &    163     &      -0.3383   &   176              \\
10       &  -0.3387  &    517      &  -0.3382     &    543     &      -0.3382  &     570       \\
11       &  -0.3384  &    1655     &  -0.3383     &    1670    &      -0.3383   &  1787          \\
\hline
\end{tabular} 
\end{table}
\begin{table}
\caption{Test case 4 : portfolio optimization in dimension 3, adaptation , extrapolated boundary treatment } 
\label{Case5NoBoundAdapt}
 \begin{tabular}{|c|c|c|c|c|c|c|}
\hline
 & \multicolumn{2}{c|}{LINEAR} & \multicolumn{2}{c|}{QUADRATIC} & \multicolumn{2}{c|}{CUBIC}   \\
\hline
 Precision   & Solution & Time &   Solution & Time  &   Solution & Time        \\
\hline
1e-3         &  -0.3430   &    15     & -0.3380    &   10  &     -0.3385 & 9   \\
0.00025      &  -0.3401   &    38     & -0.3381    &   24  &     -0.3383  & 23  \\
6.25e-05     &  -0.3390   &    90     & -0.3381    &   53  &     -0.3381 & 54   \\
1e-5         &  -0.3384   &   162     & -0.3381    &  105  &     -0.3382 & 97       \\
\hline  
\end{tabular} 
\end{table}
\subsubsection{A four dimension example}
Now take the same problem as before but modify the dynamic of the asset using a CEV-SV model (see \cite{lord} for a presentation of this model) :
\begin{eqnarray*}
 dS_t
 &=& 
 \mu S_tdt + \sigma \sqrt{Y_t} {S_t}^{\beta} dW^{(1)}_t,
 \\
 dY_t 
 &=&  k\left(m -Y_t\right)dt +  c  \sqrt{Y_t} dW^{(2)}_t 
 \end{eqnarray*}
The optimization problem is still given by equation \reff{prob-portefeuille}.
In this case the value function $v(t,x,r,s,y)$ satisfies the HJB equation :
 \begin{align}
 0&=-v_t - (\mathbf{L}^r +\mathbf{L}^Y +\mathbf{L}^{S})v - r x v_x 
 \nonumber\\
  & -\sup_{\theta} 
      \left\{\theta (\mu\!-\!r)v_x
             \!+\!\theta \sigma^2 y s^{2 \beta-1} v_{xs}
             \!+\!\frac12 \theta^2\sigma^2 y s^{2\beta-2}
      \right\} \nonumber
 \end{align}
where
\begin{eqnarray*}
 \mathbf{L}^{S}v
 &=&
 \mu sv_{s}+\frac12\sigma^2 s y v_{ss}.
 \end{eqnarray*}
Take $\eta=1$,  $\mu = 0.10$, $\sigma =0.3$, $\beta =0.5$ for the asset,
$ k =0.1$, $m = 1.$, $c = 0.1$ for the diffusion process of the asset. The maturity $T$ is still equal to $1$.
Parameters for the interest rate are the same as in the three dimension case.
The resolution domain is  $[-5,10]\times [-0.2,0.5] \times [0.02,5] \times [0.02,5] $.
The refinement domain is defined on $[-3.5,8.5]\times [-0.13,0.43] \times [0.5, 4.5] \times [0.5,4.5] $
calculation are achieved on 24 cores. The maximal refinement level is fixed to $12$.
In all the cases, the convergence appears to be rather difficult and a solution with more than 3 digits is hard to find.
\begin{table}
\caption{Test case 5 : portfolio optimization in dimension 4, no adaptation , extrapolated boundary treatment } 
\label{Case6NoBoundNoAdapt}
 \begin{tabular}{|c|c|c|c|c|c|c|c|}
\hline
 & \multicolumn{2}{c|}{LINEAR} & \multicolumn{2}{c|}{QUADRATIC}& \multicolumn{2}{c|}{CUBIC}    \\
\hline
 Level   & Solution &  Time  &  Solution & Time &   Solution & Time        \\
\hline
5        &  -0.3542  &    4       &  -0.3315 &   4        &  -0.3329   &    4     \\
6        &  -0.3435  &    20      &  -0.3354 &   20       &  -0.3371   &    21      \\
7        &  -0.3453  &    99      &  -0.3358 &   102      &  -0.3361   &     108     \\
8        &  -0.3443  &    441     &  -0.3364 &   452      &  -0.3384|  &     481       \\
9        &  -0.3382  &    1832    &  -0.3360 &   1877     &  -0.3359   &    2017       \\
10       &  -0.3358  &    7336    &  -0.3360  &   7478     &  -0.3360    &     7915     \\
11       &  -0.3357  &    28210   & -0.3352  &  28870     &    -0.3353 &   30143          \\
12       &   -0.3358        &   101921   & -0.3361  &  104950    &   -0.3360  &  108000         \\
\hline 
\end{tabular} 
\end{table}
\begin{table}
\caption{Test case 5 : portfolio optimization in dimension 4, adaptation , extrapolated boundary treatment, initial level equal to 6 } 
\label{Case6NoBoundAdapt}
 \begin{tabular}{|c|c|c|c|c|c|c|c|}
\hline
 & \multicolumn{2}{c|}{LINEAR} & \multicolumn{2}{c|}{QUADRATIC}& \multicolumn{2}{c|}{CUBIC}    \\
\hline
 Precision   & Solution &  Time  &  Solution & Time &   Solution & Time        \\
\hline
0.001          &  -0.3415  &    610      &  -0.3373 &   302       &  -0.3367   &    289       \\
0.00025        &  -0.3369  &    2420       & -0.3349 &   1471      &  -0.3353   &   1458      \\
6.25e-05       &  -0.3356  &    7703      &  -0.3353 &   5701      &  -0.3353   &    5784     \\
1.56e-05       &  -0.3355  &    14504      &  -0.3350 &   12716      &  -0.3349  &   12860     \\
\hline 
\end{tabular} 
\end{table}
\subsubsection{A five dimensional example}

We now let $n=2$, and we assume that the interest rate process is defined by the Ornstein-Uhlenbeck process:
 \begin{eqnarray*}
 dr_t &=& \kappa(b-r_t)dt+\zeta dW^{(0)}_t.
 \end{eqnarray*}
While the price process of the second security is defined by an Heston model, the first security's price process is defined by a CEV-SV model :
 \begin{eqnarray*}
 dS^{(i)}_t
 &=& 
 \mu_i S^{(i)}_tdt + \sigma_{i} \sqrt{Y^{(i)}_t} {S^{(i)}_t}^{\beta_i} dW^{(i,1)}_t,~~\beta_2=1,
 \\
 dY^{(i)}_t 
 &=&  k_i\left(m_i -Y^{(i)}_t\right)dt +  c_i  \sqrt{Y^{(i)}_t} dW^{(i,2)}_t 
 \end{eqnarray*}
where $\left(W^{(0)},W^{(1,1)},W^{(1,2)},W^{(2,1)},W^{(2,2)}\right)$ is a Brownian motion in $\R^5$, and for simplicity we considered a zero-correlation between the security price process and its volatility process. 

Since $\beta_2=1$, the value function of the portfolio optimization problem \reff{prob-portefeuille} does not depend on the $s^{(2)}-$variable. 
Given an initial state $(X_t,r_t,S^{(1)}_t$ $,Y^{(1)}_t,Y^{(2)}_t)=(x,r,s_1,y_1,y_2)$ at the time origin $t$, the value function $v(t,x,r,$ $s_1,y_1,y_2)$ satisfies the HJB equation:
 \begin{align}
 0&=-v_t - (\mathbf{L}^r +\mathbf{L}^Y +\mathbf{L}^{S^1})v - r x v_x 
 \nonumber\\
  & -\sup_{\theta_1,\theta_2} 
      \left\{\theta \cdot(\mu\!-\!r\1)v_x
             \!+\!\theta_1 \sigma_1^2 y_1 s_1^{2 \beta_1-1} v_{xs_1}
             \!+\!\frac12(\theta_1^2\sigma_1^2 y_1 s_1^{2\beta_1-2}
             \!+\!\theta_2^2\sigma_2^2y_2)v_{xx}
      \right\} \nonumber
 \end{align}
where
 \begin{eqnarray*}
 \mathbf{L}^r v=\kappa(b-r)v_r+\frac12\zeta^2 v_{rr},
 &&
 \mathbf{L}^Y v=\sum_{i=1}^2k_i\left(m_i -y_i\right)v_{y_i} +  \frac12 c_i^2  y_iv_{y_i y_i},
 \end{eqnarray*}
 \begin{eqnarray*}
 \mbox{and}~~\mathbf{L}^{S^1}v
 &=&
 \mu_1 s_1v_{s_1}-\frac12\sigma_1^2 s_1 y_1 v_{s_1s_1}.
 \end{eqnarray*}
We take the following parameters (as in \cite{warin}) :
$\eta=1$,  $\mu_1 = 0.10$, $\sigma_1 =0.3$, $\beta_1 =0.5$ for the first asset,
$ k_1 =0.1$, $m_1 = 1.$, $c_1 = 0.1$ for the diffusion process of the first asset. The second asset is defined by the same parameters as in the two dimensional example: $\mu_2=0.15$, $c_2=0.2$, $m=0.3$ and $Y^{(2)}_0=m$. As for the interest rate model we take $b=0.07$, $r_0= b$, $\zeta= 0.3$. The initial asset values and wealth values
are equal to $1$, the maturity is equal to $1$.
The number of time steps is equal to $200$.\\
The resolution domain is  $[-5,10]\times [-0.2,0.5] \times [0.02,5] \times [0.015,7] \times [0.15,7] \times [0.04,2.1]$.
The space of control is in dimension 2 : $(\theta_1,\theta_2)$  are taken in $[-1;5, 1.5] \times [-1.5,1.5]$. The classical way to find the optimal control at a given point for a given date consists in
discretizing the command with a thin mesh and to test all the commands to get the optimal one. 
We propose to use a sparse grid in dimension 2 to represent the space of commands and we interpolate the function obtained to estimate this optimal command on a very thin grid. Once the optimal command is obtained, it is used to re estimate the value function.
This approach has been tested  with the cubic interpolator and results are given in table \ref{Case7_command}. Results are obtained with 384 cores. 

\begin{table}
\small
\caption{Sparse grid for commands : 'Level' indicates the level of the sparse grid used to approximate to value function. 'Discretization for commands' indicate the discretization used to interpolated the commands. In the case of use of sparse grids to discretize commands, the level of the sparse grid is given by 'Level C'. The optimal command is calculated on the thin grid $64 \times 64$ } 
\label{Case7_command}
 \begin{tabular}{|c|c|c|c|c|c|c|}
\hline
  Discretization for commands & \multicolumn{2}{c|}{Full grid $64 \times 64$} & \multicolumn{2}{c|}{Level C = $4$} & \multicolumn{2}{c|}{Level C = $5$}    \\
\hline\hline
 Level for solution  & Solution &  Time  &  Solution & Time &   Solution & Time        \\
\hline
  9      &  -0.2998  &    5295      &  -0.2998 &   89       &  -0.2998   &    173      \\
 10      &  -0.3083  &    29046       & -0.3081 &   424      &  -0.3083   &   886      \\
\hline
\end{tabular} 
\end{table}
The results obtained for the valorization in dimension 5 for linear, quadratic and cubic estimator are given in table
\ref{Case7NoBoundNoAdapt} with a maximal discretization level equal to $10$.  Calculation time are obtained for 384 cores.
Results with adaptation are given in table \ref{Case7NoBoundAdapt}.
\begin{table}
\caption{Test case 6 : portfolio optimization in dimension 5, no adaptation , extrapolated boundary treatment } 
\label{Case7NoBoundNoAdapt}
 \begin{tabular}{|c|c|c|c|c|c|c|}
\hline
  & \multicolumn{2}{c|}{LINEAR} & \multicolumn{2}{c|}{QUADRATIC} & \multicolumn{2}{c|}{CUBIC}    \\
\hline
 Level   & Solution &  Time  &  Solution & Time  &  Solution  & Time         \\
\hline
6        & -0.7167   &  50       & -0.2889  &  51  &    -0.2933   &  52    \\
7        & -0.3326   &  230      & -0.3035   & 227   &    -0.3044   &  233   \\
8        & -0.2980   &  1032     &  -0.3124   & 1047   &    -0.3132   &  1091  \\
9        & -0.3134   &  4641     &  -0.3092  &  4716  &    -0.3091   &  4980  \\
10       & -0.3112   &  21263    &  -0.3089  &  21238   &    -0.3089   &  22500    \\
\hline
\end{tabular} 
\end{table}
\begin{table}
\caption{Test case 6 : portfolio optimization in dimension 5 , adaptation , extrapolated boundary treatment, initial level equal to 7 } 
\label{Case7NoBoundAdapt}
 \begin{tabular}{|c|c|c|c|c|c|c|c|}
\hline
 & \multicolumn{2}{c|}{LINEAR} & \multicolumn{2}{c|}{QUADRATIC}& \multicolumn{2}{c|}{CUBIC}    \\
\hline
 Precision   & Solution &  Time  &  Solution & Time &   Solution & Time        \\
\hline
0.001          &-0.3392 &   1311     & -0.3085  &  1166    &   -0.3091 &   1170    \\
0.00025        &-0.3116 &   2874     &  -0.3098 &   2212   &   -0.3097 &   2192     \\
6.25e-05       &-0.3092 &   5667     & -0.3101  &   3817   &   -0.3105 &   3738   \\
1.56e-05       & -0.3101 &  10115    &  -0.3095 &   6396   &   -0.3095 &   6262   \\
\hline 
\end{tabular} 
\end{table}

\subsection{A less regular three dimension problem}
In this problem, we have only one security  defined by a CEV-SV models :
 \begin{eqnarray*}
 dS_t
 &=& 
 \mu S_tdt + \sigma \sqrt{Y_t} {S_t}^{\beta} dW^{(1)}_t,
 \\
 dY_t 
 &=&  k \left(m -Y_t\right)dt +  c  \sqrt{Y_t} dW^{(2)}_t 
 \end{eqnarray*}
The investor short of a call option wants to optimize his portfolio following equation \reff{call-prob-portefeuille}.
Given an initial state $(X_t,S_t,Y_t)=(x,s,y)$ at the time origin $t$, the value function $v(t,x,s,y)$ satisfies the HJB equation:
 \begin{align}
 0&=-v_t - (\mathbf{L}^Y +\mathbf{L}^{S})v 
 \nonumber\\
  & -\sup_{\theta} 
      \left\{\theta \cdot \mu v_x
             \!+\!\theta \sigma ^2 y s^{2 \beta-1} v_{xs}
             \!+\!\frac12(\theta^2\sigma^2 y s^{2\beta-2}
      \right\} \nonumber
 \end{align}
where
 \begin{eqnarray*}
 \mathbf{L}^Y v= \left(m -y\right)v_{y} +  \frac12 c^2  y v_{y y},
 \end{eqnarray*}
 \begin{eqnarray*}
 \mbox{and}~~\mathbf{L}^{S}v
 &=&
 \mu s v_{s}-\frac12\sigma^2 s y v_{ss}.
 \end{eqnarray*}
The asset parameters are given by   $\mu = 0.10$, $\sigma =0.3$, $\beta =0.5$. The diffusion asset has the parameters  $ k =0.1$, $m = 1.$, $c = 0.1$.
The commands $\theta_t$ can take values in $[-1.5, 1.5]$ and the interval is discretized with 50 steps.
We solve the problem with a number of time step equal to $200$.
We take the solution calculated with extrapolated boundaries with the linear, quadratic and cubic  interpolator. We use 24 cores for the different calculations.
As it can be seen in table \reff{Case8NoBoundNoAdapt} convergence is slower than in the more regular cases.
As it can be seen in table \reff{Case8NoBoundAdapt}, the adaptation gives good results but its interest for the cubic case is not obvious. 
\begin{table}
\caption{Test case 7 : portfolio optimization  short of a call option in dimension 3 , no adaptation , extrapolated boundary treatment } 
\label{Case8NoBoundNoAdapt}
 \begin{tabular}{|c|c|c|c|c|c|c|}
\hline
 & \multicolumn{2 }{c|}{LINEAR} & \multicolumn{2}{c|}{QUADRATIC}& \multicolumn{2}{c|}{CUBIC}    \\
\hline
 Level   & Solution &  Time  &  Solution & Time   &  Solution & Time       \\
\hline
7        &  0.8909   &     13     &  -0.5936 &  13     & -0.3173  &  14   \\
8        &  -0.3373  &    46      &  -0.4313 &   47    &  -0.4069 &  51     \\
9        &  -0.2220  &     157    &  -0.4294 &   161   &  -0.4340 &  171   \\
10       &  -0.4101  &    521     &  -0.4327 &   524   & -0.4325  &  566          \\
11       & -0.4208   &    1629    &  0.4311  &   1640  & -0.4309  &  1760            \\
12       & -0.4299   &    4856    &  -0.4310 &  4981   & -0.4309  &   5325            \\
13       &  -0.4303  &    14233   &  -0.4309 &  14527  &  -0.4309 & 15536    \\
\hline
\end{tabular} 
\end{table}

\begin{table}
\caption{Test case 7 : portfolio optimization  short of a call option in dimension 3 , adaptation with initial level equal to 9, extrapolated boundary treatment } 
\label{Case8NoBoundAdapt}
 \begin{tabular}{|c|c|c|c|c|c|c|}
\hline
 & \multicolumn{2 }{c|}{LINEAR} & \multicolumn{2}{c|}{QUADRATIC}& \multicolumn{2}{c|}{CUBIC}    \\
\hline
 Precision   & Solution &  Time  &  Solution & Time   &  Solution & Time       \\
\hline
 0.001           & -0.3904   & 2452    &  -0.4329 & 1515   & - 0.4308  & 1428 \\
 0.00025         & -0.4181   & 3198    &  -0.4311 & 2123   & - 0.4311 &  2002   \\
 6.25e-05        & -0.4282   & 3910    &  -0.4309 & 2818   & - 0.4309  &  2663   \\
 1.56e-05        & -0.4294  &  4249    & &  &&  \\
 3.9e-06         & -0.4299  &  4541    & &  &&  \\
\hline
\end{tabular} 
\end{table}

\end{document}